\theoremstyle{plain}
\newtheorem{theorem}{Theorem}[section]
\newtheorem{definition}[theorem]{Definition}
\newtheorem{lemma}[theorem]{Lemma}
\newtheorem{corollary}[theorem]{Corollary}
\newtheorem{proposition}[theorem]{Proposition}
\theoremstyle{remark}
\def\R{{\mathbb R}}
\def\N{{\mathbb N}}
\def\virgp{\raise 2pt\hbox{,}}
\def\({\left(}
\def\){\right)}
\def\<{\left\langle}
\def\>{\right\rangle}
\def\le{\leqslant}
\def\ge{\geqslant}
\def\les{\lesssim}
\def\dn{\partial_\nu}
\def\Tend#1#2{\mathop{\longrightarrow}\limits_{#1\rightarrow#2}}
\def\d{{\partial}}
\def\eps{\varepsilon}
\DeclareMathOperator{\Lmeq}{{\mathcal L}(m_{\rm eq})}
\DeclareMathOperator{\Ltmeq}{{\mathcal L}(t,m_{\rm eq}(t))}
\numberwithin{equation}{section}
\begin{document}


\title{Hysteresis for ferromagnetism: asymptotics 
of some 2-scale Landau-Lifshitz model}   
\author{Eric Dumas\footnote{Universit\'e Grenoble 1, Institut Fourier, 
100 rue des maths-BP~74, 38402 Saint Martin d'H\`eres cedex, France; 
edumas@ujf-grenoble.fr.}, 
St\'ephane Labb\'e\footnote{Universit\'e Grenoble 1, Laboratoire Jean 
Kuntzmann, 51 rue des maths, 38402 Saint Martin d'H\`eres cedex, 
France; Stephane.Labbe@imag.fr. 
Support by The Nano-Science Foundation, Grenoble, Project HM-MAG is acknowledged.}}

\maketitle

\noindent
{\bf Abstract.} 
We study a 2-scale version of the Landau-Lifshitz 
system of ferromagnetism, introduced by Starynkevitch to modelize 
hysteresis: the response of the magnetization is fast compared to 
a slowly varying applied magnetic field. Taking the exchange term 
into account, in space dimension 3, we prove that, under some 
natural stability assumption on the equilibria of the system, 
the strong solutions follow the dynamics of these equilibria. 
We also give explicit examples of relevant equilibria 
and exterior magnetic fields, when the ferromagnetic medium 
occupies some ellipsoidal domain. 

\tableofcontents

\section{Introduction}
\label{sec:intro}

Hysteresis is a widely studied, yet not completely understood 
phenomenon. It has played a role from the very beginning of the 
works on magnetism. 
Lord Rayleigh \cite{Rayleigh87} proposed a model 
for ferromagnetic hysteresis in 1887, while the most achieved  
micromagnetism theory goes back to Landau and Lifshitz, in 1935 
(see \cite{LL69}). 

In \cite{Visintin94}, Visintin gives many historical references, 
underlines the links between several forms of hysteresis  
(in particular, from plasticity, and from ferromagnetism),  
and how it is related to phase transitions. He performs a 
mathematical study of the so-called hysteresis operators, 
including the most famous one, due to Preisach. 

Recently in \cite{CEF09}, Carbou, Effendiev and Fabrie have proved 
the existence of strong solutions to a model of ferromagnetic 
hysteresis due to Effendiev. 

In this paper, we rather investigate properties of a two-scale  
model introduced by Starynkevitch in \cite{StarynPhD}. 
This model describes the dynamics obtained when some exterior 
magnetic field is applied to the ferromagnetic material 
under consideration, while the response of the magnetization 
occurs on a much shorter time scale (say, denoted by $\eps>0$). 
Mathematically, such models, associated to ordinary differential 
equations, had been studied in the nonstandard analysis framework, 
leading to ``canard cycles'' (see \cite{DR89}). Considering a 
Landau-Lifshitz model in 0 space dimension (thus, an ODE), 
Starynkevitch studies the possible equilibria of the system, 
and the asymptotic behavior of the solutions (as the above mentioned 
parameter $\eps$ goes to zero) when the exterior magnetic field 
slowly varies. 

Our aim is to extend Starynkevitch's approach to the  
Landau-Lifshitz model in space dimension three, taking exchange 
term into account. This means, giving the asymptotic description 
of solutions to the slow-fast corresponding system of partial 
differential equations. 
Here, we prove such a result away from the bifurcation points of 
hysteresis loops. More precisely, assuming that the system 
(described by its magnetization) possesses at each time $t$ 
some stable equilibrium $m_{\rm eq}(t)$, 
and is submitted to some slowly varying exterior magnetic field, 
we show that the magnetization follows the dynamics of $m_{\rm eq}$. We also give explicit examples (for ellipsoidal domains) 
of relevant equilibria and exterior magnetic fields. 

\section{Statement of the results}
\label{sec:result}

The initial and boundary value problem associated to the 
2 scale Landau-Lifshitz equation considered reads:
\begin{equation}
\label{eq:ll}
\left\{  
\begin{array}{ll}
& \eps \d_t m^\eps = m^\eps \wedge h_T^\eps - \alpha m^\eps \wedge 
(m^\eps \wedge h_T^\eps) , \, \text{ for } t\ge0, \, x\in\Omega,  \\
& \dn m^\eps_{|_{\d\Omega}} = 0 , \\
& m^\eps_{|_{t=0}} = m_0 . 
\end{array} 
\right.
\end{equation}
The unknown is the magnetization $m^\eps$, function of the 
time variable $t\ge0$ and of the space variable $x\in\Omega$, with 
values in the sphere $S^2\subset\R^3$. The domain $\Omega$ occupied 
by the ferromagnet is a subset of $\R^3$. Furthermore, 
$h_T^\eps=h_T(t,m^\eps(t))$, where the total magnetic field $h_T$ 
is defined by
\begin{equation}
\label{eq:hT}
h_T(t,m) = \overline{\Delta m} + h_{\rm d}(m) + h_{\rm ext}(t).
\end{equation}
Here, the first term $\Delta m$ is the ``exchange term'', which 
tends to impose a constant magnetization (domains where magnetization 
is constant are called ``Weiss domains''), and  $\overline{\Delta m}$ 
denotes the extension of $\Delta m$ by $0$ out of $\Omega$. 
The second term, yielding spatial variations of the 
magnetization, is the ``demagnetizing field'' $h_{\rm d}(m)$, 
which results from a quasi-stationary approximation 
of Maxwell's equations; it is defined 
(at least, for $m \in L^2(\Omega,\R^3)$, 
as an element of $L^2(\R^3,\R^3)$) by 
\begin{equation*}
\mathop{\rm curl} h_{\rm d}(m) = 0 \quad \text{and} \quad 
\mathop{\rm div} \left( h_{\rm d}(m) + \overline m \right) = 0 
\quad \text{in } \R^3 .
\end{equation*}
Classical properties of the mapping $m \mapsto h_{\rm d}(m)$ 
are recalled in Section~\ref{sec:fctalfacts}. The third term,  
$h_{\rm ext}$, denotes some given exterior field, which is assumed to 
depend on time (and possibly on space). The positive constant $\alpha$ 
is some damping coefficient, which appears in the model when passing 
from a microscopic to a macroscopic description. The small parameter 
$\eps>0$ expresses the fact that, while the exterior field $h_{\rm ext}$ 
depends on $t$, and has time variations at scale 1, the magnetization 
$m^\eps$ essentially depends on $t/\eps$, and thus has variations at 
the much more rapid scale $\eps$. 

Throughout this paper, for any $s\in\N$, we denote by $H^s(\Omega)$ 
the usual Sobolev space of functions with values in some vector space 
$\R^N$, whereas $H^s(\Omega,S^2)$ is the Sobolev space of functions 
with values in the sphere $S^2$ (which is not a vector space),
$$
H^s(\Omega,S^2) = \{ m \in H^s(\Omega) \mid 
|m| \equiv 1 \text{ almost everywhere} \}. 
$$
Finally, for $s\ge2$, $H^s_N(\Omega)$ denotes the subspace 
of functions in $H^s(\Omega)$ with homogeneous von Neumann 
boundary condition,
$$
H^s_N(\Omega) = \{ m \in H^s(\Omega) \mid  
\dn m_{|_{\d\Omega}} = 0 \}, 
$$
and 
$$
H^s_N(\Omega,S^2) = H^s(\Omega,S^2) \cap H^s_N(\Omega) .
$$
All these spaces (even if not vector spaces) inherit the (metric) 
topology given by the usual norm on $H^s(\Omega)$.

We prove the following 
\begin{theorem} \label{th:asympt}
Let $\Omega$ be an open and bounded subset of $\R^3$, 
with smooth boundary. 
Let $T>0$, and $h_{\rm ext} \in C^1([0,T],C^\infty(\R^3))$, 
bounded with bounded derivatives. 
Assume that there exist $m_{\rm eq} \in C^1([0,T],H^2_N(\Omega,S^2))$ 
and $m_0 \in H^2_N(\Omega,S^2)$ such that \\
(i) for all $t_0\in[0,T]$, $m_{\rm eq}(t_0)$ is an equilibrium 
for 
\begin{equation} \label{eq:llt0} 
\d_t m = m \wedge h_T(t_0,m) - \alpha m \wedge 
\Big( m \wedge h_T(t_0,m) \Big) 
\end{equation}
(see \eqref{eq:caractmeq}); \\
(ii) the solution $n_0$ to the initial and boundary value problem 
\begin{equation} \label{eq:n0} \left\{
\begin{array}{ll}
& \d_t n_0 = n_0 \wedge h_T(0,n_0) - \alpha n_0 \wedge 
\Big( n_0 \wedge h_T(0,n_0) \Big) , \\
& \dn {n_0}_{|_{\d\Omega}} = 0 , \\
& {n_0}_{|_{t=0}} = m_0 ,
\end{array} \right.
\end{equation}
is global ($n_0 \in C([0,\infty),H^2_N(\Omega,S^2))$), 
with $\nabla\Delta n_0 \in L^2((0,\infty)\times\Omega)$,  
and $n_0(t)$ converges in $H^2(\Omega)$, as $t$ goes to $\infty$, 
towards $m_{\rm eq}(0)$; \\ 
(iii) the linearized operator $\Lmeq$ given by 
\eqref{eq:defLeq} has the following dissipation property:
\begin{equation} \label{eq:hypdissip}
\begin{split}
& \text{there exist } C_{\rm lin}>0 \text{ and } \eta>0 
\text{ such that}, \\ 
& \text{for all } \delta \in C([0,T],H^\infty(\Omega)) 
\text{ with } |m_{\rm eq}+\delta| \equiv 1 \text{ and } 
\dn \delta_{|_{\d\Omega}} = \dn \Delta \delta_{|_{\d\Omega}} = 0, \\ 
& \sup_{t\in\lbrack0,T\rbrack} \|\delta(t)\|_{H^2(\Omega)} \le \eta 
\text{ implies:} \\
& \forall t \in [0,T], \quad 
\Big( \Ltmeq\delta(t) \mid \delta(t) \Big)_{H^2(\Omega)} 
\le -C_{\rm lin} \|\delta(t)\|_{H^2(\Omega)}^2.
\end{split}
\end{equation}
Then, there is $\eps_0>0$ such that, 
for all $\eps \in (0,\eps_0)$, 
the solution $m^\eps$ to \eqref{eq:ll} exists up to time $T$ ($m^\eps \in C([0,T],H^2_N(\Omega,S^2))$), 
and converges in $L^2((0,T),H^2(\Omega)) \cap C([t,T],H^2(\Omega))$ towards $m_{\rm eq}$ 
as $\eps$ goes to zero, for all $t\in(0,T)$.
\end{theorem}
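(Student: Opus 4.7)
The plan is to treat this as a singular perturbation problem of Tikhonov--Gronwall type: the slow outer profile is $m_{\rm eq}(t)$, and the fast initial-layer dynamics near $t=0$ are captured by $n_0(t/\eps)$. After this layer, the dissipation hypothesis (iii) will pin $m^\eps$ to $m_{\rm eq}$ on the time scale of order $\eps$.

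First I would construct a two-scale approximate solution
\[
m^\eps_{\rm app}(t) \;=\; \Pi\!\Big( m_{\rm eq}(t) + n_0(t/\eps) - m_{\rm eq}(0) \Big),
\]
where $\Pi$ denotes the nearest-point projection onto $S^2$ (well-defined since the argument is uniformly close to $S^2$ by (ii)). The outer part $m_{\rm eq}(t)$ carries the slow evolution, while the correction $n_0(t/\eps) - m_{\rm eq}(0)$ patches the initial condition $m_0$ and decays in $H^2$ as $t/\eps\to\infty$, with $\nabla\Delta n_0 \in L^2$ in the fast variable. Plugging $m^\eps_{\rm app}$ into \eqref{eq:ll}, using that $m_{\rm eq}(t)$ is an equilibrium at frozen $t$ by (i) and that $n_0(t/\eps)$ solves the Landau--Lifshitz equation with frozen field $h_T(0,\cdot)$ by (ii), the residual
\[
r^\eps \;:=\; \eps\,\d_t m^\eps_{\rm app} - m^\eps_{\rm app}\wedge h_T^\eps + \alpha\, m^\eps_{\rm app}\wedge \bigl( m^\eps_{\rm app}\wedge h_T^\eps \bigr)
\]
splits into an $O(\eps)$ contribution from $\eps\,\d_t m_{\rm eq}$ and cross terms measuring the mismatch between $h_T(0,\cdot)$ and $h_T(t,\cdot)$ on the support of the layer; the latter are small in $L^2_t H^2_x$ by the $H^2$-decay of $n_0(\tau)-m_{\rm eq}(0)$ and the Lipschitz continuity of $h_T$.

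Second I would set $\delta^\eps := m^\eps - m^\eps_{\rm app}$ and linearize. After expansion one gets
\[
\eps\,\d_t \delta^\eps \;=\; \Ltmeq\,\delta^\eps \;+\; N^\eps(\delta^\eps) \;+\; r^\eps,
\]
where $N^\eps$ collects terms at least quadratic in $\delta^\eps$, plus linear-in-$\delta^\eps$ perturbations with coefficient $O(\|m^\eps_{\rm app}-m_{\rm eq}\|_{H^2})$ that decay with $t/\eps$. Taking the $H^2$ inner product with $\delta^\eps$ and invoking \eqref{eq:hypdissip}, as long as $\|\delta^\eps\|_{H^2}\le \eta$,
\[
\frac{\eps}{2}\,\frac{d}{dt}\|\delta^\eps\|_{H^2}^2 \;\le\; -C_{\rm lin}\,\|\delta^\eps\|_{H^2}^2 + C\,\|\delta^\eps\|_{H^2}^3 + \|r^\eps\|_{H^2}\,\|\delta^\eps\|_{H^2}.
\]
A Gronwall argument on the rescaled time $\tau = t/\eps$, combined with a bootstrap on the smallness threshold, yields $\|\delta^\eps(t)\|_{H^2}\le C\eps$ on $[0,T]$, for $\eps$ small enough. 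A standard continuation argument then upgrades the local-in-time existence of $m^\eps$ to existence up to $T$, since $\delta^\eps$ remains small and $m^\eps_{\rm app}$ stays uniformly bounded in $H^2_N(\Omega,S^2)$. Convergence in $L^2((0,T),H^2)$ and in $C([t,T],H^2)$ for any $t\in(0,T)$ follows, the initial layer being concentrated in an $O(\eps)$ neighborhood of $0$.

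The main obstacle is the $H^2$ energy estimate itself. The hypothesis \eqref{eq:hypdissip} only applies to perturbations satisfying $|m_{\rm eq}+\delta|\equiv 1$ together with the two boundary conditions $\dn \delta_{|_{\d\Omega}} = \dn\Delta\delta_{|_{\d\Omega}} = 0$; ensuring the second condition for $\delta^\eps$ forces the ansatz $m^\eps_{\rm app}$ to satisfy compatibility conditions at the boundary at every order in $\eps$, which is delicate both for $n_0$ (the third boundary condition must be propagated by the fast-time flow) and for the $S^2$-projection $\Pi$. Moreover, the nonlocal demagnetizing field $h_{\rm d}$ and the triple products in the Landau--Lifshitz nonlinearity produce commutator terms when computing $\bigl(\Ltmeq\delta^\eps \mid \delta^\eps\bigr)_{H^2}$; one must check that these commutators together with $N^\eps(\delta^\eps)$ are controlled by $\eta\,\|\delta^\eps\|_{H^2}^2$, so that they are absorbed by $C_{\rm lin}\|\delta^\eps\|_{H^2}^2$ under the bootstrap hypothesis $\|\delta^\eps\|_{H^2}\le \eta$.
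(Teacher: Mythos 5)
Your overall strategy---a matched two-scale ansatz $m^\eps_{\rm app}=\Pi\bigl(m_{\rm eq}(t)+n_0(t/\eps)-m_{\rm eq}(0)\bigr)$ followed by a single linearization around $m_{\rm eq}$---is a genuine departure from the paper's proof, which never builds a composite ansatz: it first compares $m^\eps$ with $n^\eps(t)=n_0(t/\eps)$ on an initial layer $[0,t_\eps]$ with $t_\eps=c\eps\ln(1/\eps)$ (using only the parabolic dissipation $\alpha\|\nabla\Delta\cdot\|_{L^2}^2$ and the $L^1$-in-fast-time integrability of $\|\nabla\Delta n_0\|_{L^2}^2$ from assumption (ii)), and only then compares $m^\eps$ with $m_{\rm eq}$ on $[t_\eps,T]$ via assumption (iii). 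This two-step structure is not a cosmetic choice, and your version has gaps at the points where the paper's structure is doing real work.

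\textbf{Missing parabolic dissipation.} Your central inequality
\[
\frac{\eps}{2}\frac{d}{dt}\|\delta^\eps\|_{H^2}^2 \;\le\; -C_{\rm lin}\,\|\delta^\eps\|_{H^2}^2 + C\,\|\delta^\eps\|_{H^2}^3 + \|r^\eps\|_{H^2}\,\|\delta^\eps\|_{H^2}
\]
omits the $\alpha\|\nabla\delta^\eps\|_{H^2}^2$ term. This is not optional: after writing the equation in the form $(\eps\partial_t-\alpha\Delta)\delta^\eps=\mathcal L\,\delta^\eps+\mathcal R(\delta^\eps)+\ldots$ (compare \eqref{eq:evolm-meq}), the quadratic part of $\mathcal R$ contains $\delta^\eps\wedge\Delta\delta^\eps$, and its contribution to the $H^2$ pairing, after integrating by parts, scales like $\|\nabla\Delta\delta^\eps\|_{L^2}^2\,\|\delta^\eps\|_{H^2}$. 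This is \emph{not} $O(\|\delta^\eps\|_{H^2}^3)$; the $H^3$-level factor cannot be bounded by $H^2$ norms, and can only be absorbed into $\alpha\|\nabla\Delta\delta^\eps\|_{L^2}^2$ under a smallness bootstrap on $\|\delta^\eps\|_{H^2}$. The same remark applies to $|\nabla\delta^\eps|^2\delta^\eps$. Your suggestion that all nonlinear terms are ``controlled by $\eta\|\delta^\eps\|_{H^2}^2$'' cannot be made to work without reinstating the coercive $\alpha$-term, as in \eqref{eq:estimdeltaepsH2}.

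\textbf{Boundary conditions and regularity.} Assumption \eqref{eq:hypdissip} is stated for $\delta\in C([0,T],H^\infty(\Omega))$ satisfying $\dn\delta_{|\partial\Omega}=\dn\Delta\delta_{|\partial\Omega}=0$. The paper can apply it because the whole energy estimate is performed at the Galerkine level on $V_k$, the span of Neumann eigenfunctions of $1-\Delta$: each $\psi_j\in V_k$ has $\Delta\psi_j$ proportional to $\psi_j$, so both $\dn\psi_j=0$ and $\dn\Delta\psi_j=0$ hold automatically, and the limit $k\to\infty$ is taken only after the differential inequality is established. Your nearest-point projection $\Pi$ onto $S^2$ has no reason to produce a field with $\dn\Delta m^\eps_{\rm app}=0$, and $\delta^\eps=m^\eps-m^\eps_{\rm app}$ is only $H^2$, so \eqref{eq:hypdissip} does not apply directly to $\delta^\eps$. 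You flag this as a difficulty, but trying to enforce boundary compatibility of a projected ansatz to all orders is a far heavier task than the Galerkine device, which you do not use.

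\textbf{Initial-layer coefficients.} The ``linear-in-$\delta^\eps$ perturbations with coefficient $O(\|m^\eps_{\rm app}-m_{\rm eq}\|_{H^2})$'' are of size $\|n_0(t/\eps)-m_{\rm eq}(0)\|_{H^2}$, which is only $o(1)$ as $t/\eps\to\infty$, without any quantitative rate from (ii). For $t/\eps=O(1)$ they are $O(1)$ and cannot be dominated by $C_{\rm lin}$. The paper does not invoke (iii) at all on $[0,t_\eps]$; the Gronwall argument there (Lemma \ref{lem:gronwall}) survives precisely because the $O(1)$ coefficient that appears is $\|\nabla\Delta n_0\|_{L^2}^2$, which is $L^1$ in the fast variable by the explicit part of hypothesis (ii), so that the exponential factor remains uniformly bounded. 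You would need to identify the analogous integrable structure for your unified linearization; it is not the same quantity, and it is not obvious it is integrable.

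The matched-asymptotics shape of your argument is sound in spirit, but without (a) the parabolic dissipation term, (b) a Galerkine-type device to make hypothesis (iii) legitimately applicable, and (c) a concrete identification of the integrable coefficient structure on the initial layer, the proof does not close.
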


To prove Theorem~\ref{th:asympt}, we first show that $m^\eps$ 
converges to $m_{\rm eq}(0)$ within an initial layer of size 
$t_\eps = C \eps \ln(1/\eps)$. This is achieved \emph{via} 
classical energy estimates (in $H^2$), carefully controlling 
the dependence upon $\eps$ --more technically speaking, the 
quasilinear and elliptic degenerate system of PDE's in \eqref{eq:ll} 
is first converted into a perturbation of some linear, strongly 
elliptic system, yielding the usual smooting properties, and 
a Galerkine approximation is used. 
In a second step, we prove that $m^\eps$ converges towards 
$m_{\rm eq}$ on the whole time interval $[t_\eps,T]$. 
This amount to proving of long-time existence and return to 
equilibrium result for small initial data. 
Toward this end, we use again energy estimates, 
together with the stability assumption \eqref{eq:hypdissip}.

Figure \ref{dynamics} illustrates this
corresponding asymptotic behaviour.
\begin{figure}[ht]
\includegraphics{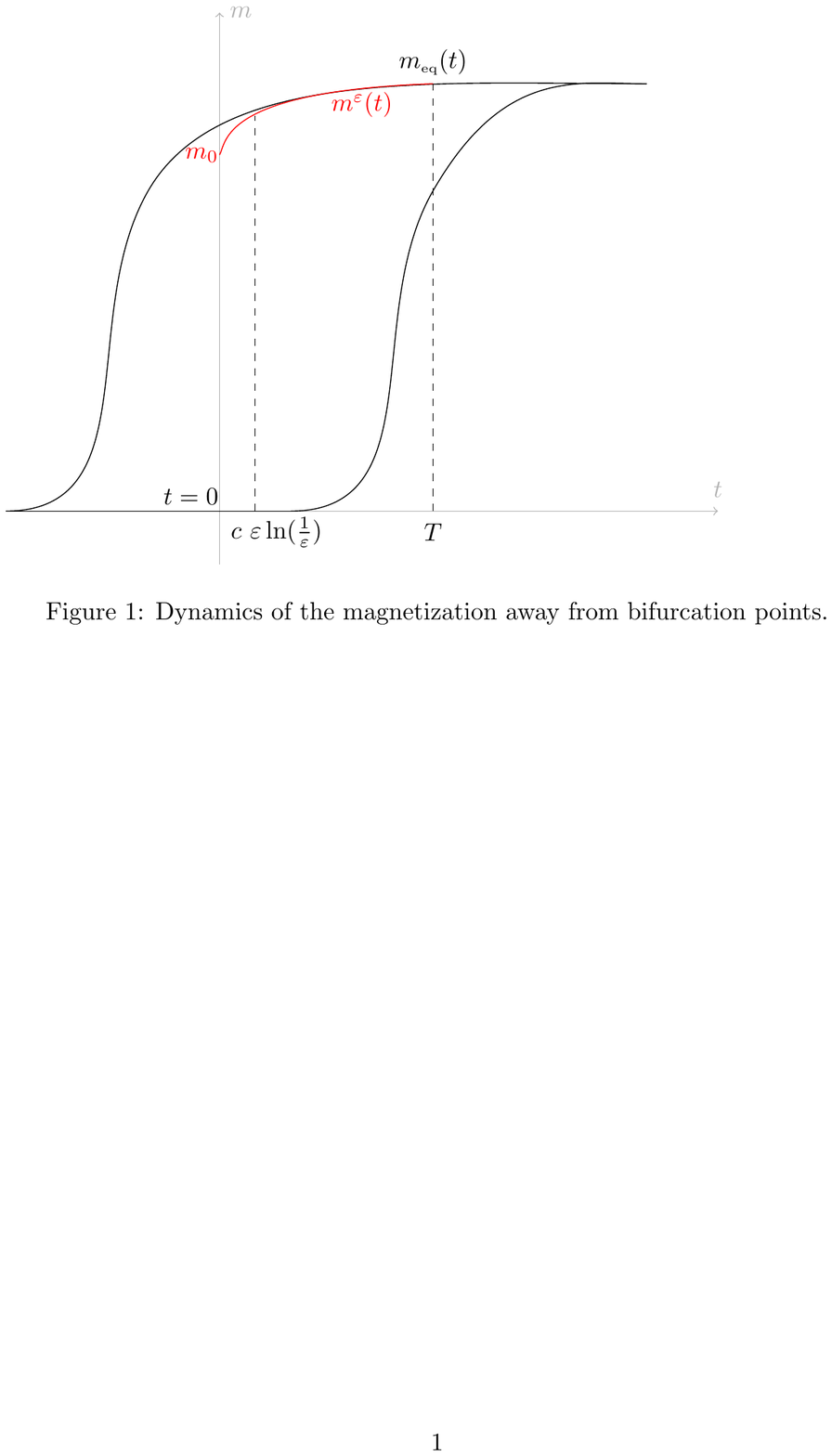}
\caption{Dynamics of the magnetization away from bifurcation points.}
\label{dynamics}
\end{figure}

The above assumptions on the equilibrium $m_{\rm eq}$ are 
discussed in Section~\ref{sec:equilibria} below. 
In particular, Assumption (ii) in Theorem~\ref{th:asympt} 
may be understood as a choice of `prepared' data $m_0$ 
allowing to deal with the initial layer $(0,c\eps\ln(1/\eps))$. 
The dissipation property \eqref{eq:hypdissip} expresses, 
for all $t_0 \in \lbrack 0,T \rbrack$, the stability 
of the linearization around $m_{\rm eq}(t_0)$ of \eqref{eq:ll}, 
with $\eps=1$ and with $h_{\rm ext}$ replaced with 
$h_{\rm ext}(t_0)$, independent of time. This is a strong 
assumption, which ensures global existence of the solutions 
to the corresponding Landau-Lifshitz equation, for initial data 
close to $m_{\rm eq}(t_0)$:
\begin{proposition} \label{prop:globalexist}
Let $\Omega$ be an open and bounded subset of $\R^3$, 
with smooth boundary. Consider an exterior 
magnetic field $h_{\rm ext} \in C^\infty(\R^3)$ 
(independent of time) bounded with bounded derivatives. 
Assume that there exists $m_{\rm eq} \in H^2_N(\Omega,S^2)$ 
(independent of time) satisfying the equilibrium condition 
\begin{equation} \label{hyp:equil}
m_{\rm eq} \wedge 
( \Delta m_{\rm eq} + h_{\rm d}(m_{\rm eq}) + h_{\rm ext} ) = 0 
\quad \text{on } \Omega ,
\end{equation}
as well as the stability condition 
\begin{equation} \label{hyp:stabil}
\begin{split}
& \text{there exist } C_{\rm lin}>0 \text{ and } \eta>0 
\text{ such that}, \\ 
& \text{for all } \delta \in H^\infty(\Omega) 
\text{ with } |m_{\rm eq}+\delta| \equiv 1 \text{ and } 
\dn \delta_{|_{\d\Omega}} = \dn \Delta \delta_{|_{\d\Omega}} = 0, \\ 
& \|\delta(t)\|_{H^2(\Omega)} \le \eta 
\text{ implies:} \\
& \Big( \mathcal L(0,m_{\rm eq})\delta \mid 
\delta \Big)_{H^2(\Omega)} 
\le -C_{\rm lin} \|\delta\|_{H^2(\Omega)}^2,
\end{split}
\end{equation}
for the linearized operator $\mathcal L(0,m_{\rm eq})$ 
given by \eqref{eq:defLeq} (with $m_{\rm eq}(0)$ and 
$h_{\rm ext}(0)$ replaced with $m_{\rm eq}$ and $h_{\rm ext}$, 
respectively). 
 
Then, there exists $\eta_0>0$ such that, for all 
$m_0 \in H^2_N(\Omega,S^2)$ satisfying 
$$
\| m_0 - m_{\rm eq} \|_{H^2(\Omega)} \le \eta_0 ,
$$
the solution $n$ to the initial and boundary value problem 
\begin{equation} \label{eq:n0bis} 
\left\{
\begin{array}{ll}
& \d_t n = n \wedge h_T(0,n) - \alpha n \wedge 
\Big( n \wedge h_T(0,n) \Big) , \\
& \dn {n}_{|_{\d\Omega}} = 0 , \\
& {n}_{|_{t=0}} = m_0 ,
\end{array} 
\right.
\end{equation}
is global ($n \in C([0,\infty),H^2_N(\Omega,S^2))$), with 
$\nabla\Delta(n-m_{\rm eq})\in L^2((0,\infty)\times\Omega)$,  
and $n(t)$ converges in $H^2(\Omega)$, as $t$ goes to $\infty$, 
towards $m_{\rm eq}$.
\end{proposition}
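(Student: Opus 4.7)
The plan is to parametrize by the perturbation $\delta = n - m_{\rm eq}$, use the equilibrium equation \eqref{hyp:equil} to expose a linear-plus-nonlinear structure, and run a continuation/bootstrap argument driven by the dissipation inequality \eqref{hyp:stabil}. First I would invoke local well-posedness of \eqref{eq:n0bis} in $H^2_N(\Omega,S^2)$, established elsewhere in the paper through the strongly elliptic reformulation and Galerkin scheme that also underlies the proof of Theorem~\ref{th:asympt}. This yields a maximal solution $n\in C([0,T^*),H^2_N(\Omega,S^2))$, and since taking the $\R^3$-inner product of \eqref{eq:n0bis} with $n$ gives $\d_t|n|^2=0$, the sphere constraint $|m_{\rm eq}+\delta|=|n|\equiv 1$ is preserved. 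The boundary compatibility condition $\dn\Delta\delta_{|_{\d\Omega}}=0$ required by \eqref{hyp:stabil} is propagated from the initial data by the (regularised) equation, or alternatively secured at the Galerkin level before passing to the limit.

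Subtracting the equilibrium equation for $m_{\rm eq}$ from \eqref{eq:n0bis} and Taylor-expanding to first order yields
$$\d_t\delta = \mathcal L(0,m_{\rm eq})\delta + \mathcal R(\delta),$$
where $\mathcal R(\delta)$ is a polynomial expression of degree $\ge 2$ in $\delta$ and its derivatives (at most $\Delta\delta$, coming from the exchange term; $h_{\rm d}$ contributes a nonlocal but $L^2$-bounded term). Taking the $H^2$ inner product with $\delta$ gives
$$\tfrac12 \tfrac{d}{dt}\|\delta(t)\|_{H^2}^2
 = \bigl(\mathcal L(0,m_{\rm eq})\delta(t)\bigm|\delta(t)\bigr)_{H^2}
 + \bigl(\mathcal R(\delta(t))\bigm|\delta(t)\bigr)_{H^2}.$$
So long as $\|\delta(t)\|_{H^2}\le\eta$, hypothesis \eqref{hyp:stabil} controls the linear contribution by $-C_{\rm lin}\|\delta\|_{H^2}^2$. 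For the nonlinear term, I would use the algebra property of $H^2(\Omega)$ in dimension $3$ ($H^2\hookrightarrow L^\infty$), integration by parts (justified by the von Neumann conditions on $\delta$ and $\Delta\delta$), and the smoothness of $m_{\rm eq}$ to obtain
$$\bigl|(\mathcal R(\delta)\mid\delta)_{H^2}\bigr| \le C\|\delta\|_{H^2}\cdot\|\delta\|_{H^2}^2 + (\text{terms absorbable in the elliptic part of }\mathcal L).$$

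Choosing $\eta_0\le\eta$ so small that $C\eta_0\le C_{\rm lin}/2$, a standard bootstrap closes: $\|\delta(t)\|_{H^2}\le\eta_0$ persists, the solution extends globally, and Gr\"onwall yields $\|\delta(t)\|_{H^2}^2 \le \|\delta(0)\|_{H^2}^2\,e^{-C_{\rm lin}t}$. In particular $n(t)\to m_{\rm eq}$ in $H^2$ as $t\to\infty$. For the $L^2$-in-time $H^3$ bound, I would refine the energy identity: instead of absorbing the elliptic gain arising from integration by parts on the $-\alpha m_{\rm eq}\wedge(m_{\rm eq}\wedge\Delta\,\cdot)$ part of $\mathcal L$, I would keep it on the left-hand side, obtaining an inequality of the form $\tfrac{d}{dt}\|\delta\|_{H^2}^2 + c\|\nabla\Delta\delta\|_{L^2}^2 \le -C_{\rm lin}\|\delta\|_{H^2}^2 + \text{small nonlinear}$, and then integrate from $0$ to $\infty$ to deduce $\nabla\Delta\delta\in L^2((0,\infty)\times\Omega)$.

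The main obstacle is the treatment of $(\mathcal R(\delta)\mid\delta)_{H^2}$: $\mathcal R$ contains quadratic terms in $\delta$ with two derivatives (e.g.\ $\delta\wedge\Delta\delta$ and $(\delta\cdot\Delta\delta)m_{\rm eq}$), so a naive pairing in $H^2$ would require $H^4$ control on $\delta$. The cure is to distribute derivatives by integration by parts, moving them whenever possible onto the smooth fixed field $m_{\rm eq}$ or onto the test slot $\delta$ at order $\le 2$, and absorbing the unavoidable top-order residuals into the elliptic gain from $\mathcal L$ via Young's inequality. This is where the second-order dissipative structure of the linearised Landau--Lifshitz operator is genuinely exploited, beyond the mere spectral-gap statement \eqref{hyp:stabil}; it is also what dictates the appearance of the boundary condition $\dn\Delta\delta_{|_{\d\Omega}}=0$ in \eqref{hyp:stabil}.
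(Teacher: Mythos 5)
Your proposal is correct and takes essentially the same approach as the paper: rewrite \eqref{eq:n0bis} as $(\d_t-\alpha\Delta)(n-m_{\rm eq})=\bigl(\mathcal L(0,m_{\rm eq})+\mathcal R(0,m_{\rm eq})\bigr)(n-m_{\rm eq})$, derive the $H^2$ differential inequality \eqref{eq:estimn-meqH2} keeping the coercive $\alpha\|\nabla(n-m_{\rm eq})\|_{H^2}^2$ term on the left and using \eqref{hyp:stabil} for the linear part, then bootstrap to obtain global existence, exponential decay \eqref{eq:n-meqexpdecay}, and the time-integrated bound $\nabla\Delta(n-m_{\rm eq})\in L^2((0,\infty)\times\Omega)$. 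The one cosmetic discrepancy is bookkeeping: the paper parks $\alpha\Delta$ explicitly on the left-hand side, so that the $\mathcal L$ of \eqref{eq:defLeq} to which \eqref{hyp:stabil} refers has no stand-alone second-order coercive piece, whereas you locate the same elliptic gain inside a $-\alpha\, m_{\rm eq}\wedge(m_{\rm eq}\wedge\Delta\,\cdot)$ term of your ``$\mathcal L$'' --- this should be reconciled when writing out the details, but does not affect the validity of the argument.
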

In the case of $m_{\rm eq}(0)$ constant over $\Omega$, 
Proposition~\ref{prop:globalexist} expresses that 
in Theorem~\ref{th:asympt}, assumptions (i) and (iii) imply 
assumption (ii), so that we get:
\begin{corollary} \label{cor:asymptbis}
Let $\Omega$ be an open and bounded subset of $\R^3$, 
with smooth boundary. 
Let $T>0$, and $h_{\rm ext} \in C^1([0,T],C^\infty(\R^3))$, 
bounded with bounded derivatives. 
Assume that there exist $m_{\rm eq} \in C^1([0,T],H^2_N(\Omega,S^2))$ satisfying assumptions (i) and 
(iii) from Theorem~\ref{th:asympt}. Assume furthermore that 
$m_{\rm eq}(0)$ is constant over $\Omega$. 

Then, there exist $\eta_0,\eps_0>0$ such that, for all 
$m_0 \in H^2_N(\Omega,S^2)$ such that 
$$
\| m_0 - m_{\rm eq}(0) \|_{H^2(\Omega)} \le \eta_0 ,
$$
and for all $\eps \in (0,\eps_0)$, 
the solution $m^\eps$ to \eqref{eq:ll} exists up to time $T$ ($m^\eps \in C([0,T],H^2_N(\Omega,S^2))$), 
and converges in $L^2((0,T),H^2(\Omega)) \cap C([t,T],H^2(\Omega))$ towards $m_{\rm eq}$ 
as $\eps$ goes to zero, for all $t\in(0,T)$.
\end{corollary}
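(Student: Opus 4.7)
The strategy is to reduce Corollary~\ref{cor:asymptbis} to Theorem~\ref{th:asympt} by supplying the return-to-equilibrium solution $n_0$ required by assumption~(ii) of the theorem, using Proposition~\ref{prop:globalexist} applied to time-frozen data.

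Concretely, I set $\tilde m_{\rm eq} \defn m_{\rm eq}(0)$ and $\tilde h_{\rm ext} \defn h_{\rm ext}(0)$, both viewed as time-independent. Assumption~(i) of Theorem~\ref{th:asympt} at $t_0 = 0$ says that $m_{\rm eq}(0)$ is an equilibrium of \eqref{eq:llt0}; since $m_{\rm eq}(0)$ is constant over $\Omega$, $\Delta m_{\rm eq}(0) \equiv 0$, so the equilibrium relation degenerates precisely to the condition \eqref{hyp:equil} for the pair $(\tilde m_{\rm eq}, \tilde h_{\rm ext})$. Assumption~(iii) at $t_0 = 0$ is exactly the dissipation condition \eqref{hyp:stabil}. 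Hence Proposition~\ref{prop:globalexist} applies to $(\tilde m_{\rm eq}, \tilde h_{\rm ext})$ and produces some threshold $\eta_0 > 0$.

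Next, for any $m_0 \in H^2_N(\Omega, S^2)$ with $\|m_0 - m_{\rm eq}(0)\|_{H^2(\Omega)} \le \eta_0$, Proposition~\ref{prop:globalexist} delivers a global solution $n \in C([0,\infty), H^2_N(\Omega, S^2))$ to \eqref{eq:n0bis} satisfying $\nabla\Delta(n - m_{\rm eq}(0)) \in L^2((0,\infty)\times\Omega)$ and $n(t) \to m_{\rm eq}(0)$ in $H^2(\Omega)$. Observing that \eqref{eq:n0bis} is verbatim \eqref{eq:n0}, I take $n_0 \defn n$; spatial constancy of $m_{\rm eq}(0)$ then converts the integrability of $\nabla\Delta(n - m_{\rm eq}(0))$ into that of $\nabla\Delta n_0$ itself, and assumption~(ii) of Theorem~\ref{th:asympt} is verified.

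With assumptions~(i), (ii) and (iii) of Theorem~\ref{th:asympt} all in hand, invoking Theorem~\ref{th:asympt} furnishes $\eps_0 > 0$ together with the claimed existence and convergence of $m^\eps$ on $[0,T]$. There is no substantive analytic obstacle in this argument: the single checkpoint is matching the equilibrium condition of Theorem~\ref{th:asympt}(i), phrased via \eqref{eq:caractmeq}, with the form \eqref{hyp:equil} required by Proposition~\ref{prop:globalexist}, and the hypothesis that $m_{\rm eq}(0)$ be constant over $\Omega$ is exactly what makes the two coincide by eliminating the $\Delta m_{\rm eq}(0)$ term.
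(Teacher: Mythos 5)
Your reduction to Proposition~\ref{prop:globalexist} to produce the return-to-equilibrium solution $n_0$ and thereby verify assumption~(ii) of Theorem~\ref{th:asympt} is the same first step as the paper's, and the observation that spatial constancy of $m_{\rm eq}(0)$ converts $\nabla\Delta(n-m_{\rm eq}(0))\in L^2$ into $\nabla\Delta n_0\in L^2$ is correct and is indeed the place where constancy is used. (Your remark that constancy is what makes \eqref{eq:caractmeq} coincide with \eqref{hyp:equil} is off-target — those two conditions already agree with no constancy hypothesis; the Laplacian term is present in both — but this does not damage the argument.)

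The genuine gap is in the final step. The conclusion of Corollary~\ref{cor:asymptbis} quantifies $\eps_0$ \emph{before} $m_0$: a single $\eps_0>0$ must work for every $m_0$ in the $\eta_0$-ball around $m_{\rm eq}(0)$. Theorem~\ref{th:asympt}, as stated and as proved (see Lemma~\ref{lem:gronwall}), produces an $\eps_0$ that depends on $n_0$ — in particular on $\| n_0\|_{L^\infty((0,\infty),H^2(\Omega))}$ and $\|\nabla\Delta n_0\|_{L^2((0,\infty)\times\Omega)}$ — and $n_0$ depends on the initial datum $m_0$. So when you write ``invoking Theorem~\ref{th:asympt} furnishes $\eps_0>0$,'' what you actually obtain is an $m_0$-dependent $\eps_0(m_0)$, and you have not shown $\inf_{m_0}\eps_0(m_0)>0$. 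The paper closes this gap by appealing to the quantitative estimates inside the proof of Proposition~\ref{prop:globalexist} — namely \eqref{eq:estimn-meqH2} and the exponential decay \eqref{eq:n-meqexpdecay} — to bound the $L^\infty_t H^2_x$ and $L^2_{t,x}$ norms of $n_0$ (hence $\nabla\Delta n_0$) solely in terms of $\eta_0$ and $m_{\rm eq}(0)$, uniformly in $m_0$, which then makes the $\eps_0$ emerging from the proof of Theorem~\ref{th:asympt} uniform as well. You should add this uniformity argument; without it the corollary's quantifier order is not established.
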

In Lemma~\ref{lem:dissip} below, we give examples 
(in ellipsoidal domains) of equilibria $m_{\rm eq}$ satisfying 
the assumptions of Corollary~\ref{cor:asymptbis}.


\section{Preliminaries}
\label{sec:prelim}

\subsection{Some functional analysis}
\label{sec:fctalfacts}

In this section, we recall some functional analysis results 
useful in the sequel.
The first of them deals with the continuity properties of 
the demagnetizating field operator $h_{\rm d}$, immediately 
deduced from the Fourier representation 
$\displaystyle \widehat{h_{\rm d}(u)}(\xi) = 
- \left(\xi\cdot\hat{\bar u}(\xi)\right)\frac{\xi}{|\xi|^2}$: 
\begin{lemma}[$h_d$ properties] 
Let $\Omega$ be an open subset of $\mathbb{R}^ 3$. 
For all $s$ in $\mathbb{N}$ and $u$ in $H^s(\Omega)$, one has
\begin{eqnarray*}
\Vert h_{\rm d}(u)\Vert_{H^s(\mathbb{R}^3)}\le \Vert u\Vert_{H^s(\Omega)}.
\end{eqnarray*}
Furthermore, for all $v$ in $L^2(\Omega)$ we have
\begin{eqnarray*}
(h_{\rm d}(u) \mid v)_{L^2(\Omega)} = 
-(u \mid h_{\rm d}(v))_{L^2(\Omega)}.
\end{eqnarray*}
\end{lemma}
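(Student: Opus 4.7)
Both claims follow from the Fourier representation $\widehat{h_{\rm d}(u)}(\xi) = -(\xi\cdot\hat{\bar u}(\xi))\,\xi/|\xi|^2$ recalled just above the lemma. The plan is to read off two structural properties of the matrix-valued symbol $M(\xi)\defn -\xi\xi^{\top}/|\xi|^2$: on $\xi\ne 0$ it is minus an orthogonal projector onto $\R\xi$, and it is a real symmetric matrix. The first property will deliver the Sobolev bound, the second will produce the pairing identity.

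For the inequality $\|h_{\rm d}(u)\|_{H^s(\R^3)}\le \|u\|_{H^s(\Omega)}$, I would use Plancherel on $\R^3$: since $\|M(\xi)\|_{\rm op}\le 1$ uniformly in $\xi$ and $M$ commutes with the scalar Bessel weight $(1+|\xi|^2)^{s/2}$, one obtains $\|h_{\rm d}(u)\|_{H^s(\R^3)}\le \|\bar u\|_{H^s(\R^3)}$. At $s=0$ this reduces immediately to the isometric identity $\|\bar u\|_{L^2(\R^3)}=\|u\|_{L^2(\Omega)}$, and for $s\in\N$ higher one argues by density and commutation with derivatives on smooth representatives before passing to the limit.

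For the duality relation, the plan is to extend $u$ and $v$ by zero to $\R^3$ and to apply Plancherel. Since $\bar v$ vanishes outside $\Omega$,
$$
(h_{\rm d}(u) \mid v)_{L^2(\Omega)} = (h_{\rm d}(u) \mid \bar v)_{L^2(\R^3)} = \int_{\R^3} M(\xi)\,\hat{\bar u}(\xi)\cdot\overline{\hat{\bar v}(\xi)}\, d\xi,
$$
and the analogous identity holds for $(u\mid h_{\rm d}(v))_{L^2(\Omega)}$ with $M$ acting on $\hat{\bar v}$. The symmetry relations carried by the real symbol $M(\xi)$ then link the two integrals and, together with the sign convention built into the multiplier, produce the claimed pairing. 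An equivalent, coordinate-free route uses the div--curl definition directly: from $\mathrm{curl}\, h_{\rm d}(u) = 0$ one writes $h_{\rm d}(u) = -\nabla\phi_u$ in $\R^3$, where $\Delta\phi_u = \mathrm{div}\,\bar u$ and $\phi_u$ vanishes at infinity; two integrations by parts on $\R^3$ then move derivatives between the two factors of each $L^2$-pairing, and the identity follows.

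There is no substantive obstacle here; the work is bookkeeping of signs from the symbol and from integrations by parts. The only mild technical care lies (i) in the interpretation of $\|\bar u\|_{H^s(\R^3)}$ when $u$ does not vanish on $\d\Omega$, which is handled by density on smooth representatives, and (ii) in justifying the integrations by parts at infinity, which is routine since $\bar u\in L^2(\R^3)$ with compact support produces a Newtonian potential $\phi_u\in\dot H^1(\R^3)$ with pointwise decay.
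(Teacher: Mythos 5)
Your overall strategy---reading everything off the Fourier symbol $M(\xi)=-\xi\xi^{\top}/|\xi|^{2}$---is exactly the route the paper invokes (it offers no further detail than ``immediately deduced from the Fourier representation''), but two of your steps do not hold up. For the Sobolev bound, your chain is $\|h_{\rm d}(u)\|_{H^s(\R^3)}\le\|\bar u\|_{H^s(\R^3)}\le\|u\|_{H^s(\Omega)}$, and the second link is false for $s\ge1$: extension by zero is not bounded from $H^s(\Omega)$ to $H^s(\R^3)$ unless the trace (and, for larger $s$, further boundary data) vanishes. For a smooth $u$ with $u\neq0$ on $\partial\Omega$, $\bar u$ jumps across $\partial\Omega$ and is not even in $H^1(\R^3)$, so no density/smoothing argument can repair this; your intermediate inequality is vacuous. (The same jump phenomenon affects $h_{\rm d}(u)$ itself: its normal component jumps by $u\cdot n$ across $\partial\Omega$, so already for a constant magnetization on a ball $h_{\rm d}(u)\notin H^1(\R^3)$. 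The pointwise bound $\|M(\xi)\|_{\rm op}\le1$ only yields the case $s=0$; the estimates the paper actually uses later are interior ones, of the type $\|h_{\rm d}(u)\|_{H^s(\Omega)}\les\|u\|_{H^s(\Omega)}$, and those require elliptic/potential-theoretic regularity for the transmission problem, not just the symbol bound.)

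For the duality identity, your own setup exposes the obstruction: since $M(\xi)$ is real and symmetric, Plancherel gives $(h_{\rm d}(u)\mid v)_{L^2(\Omega)}=(h_{\rm d}(u)\mid\bar v)_{L^2(\R^3)}=c\int_{\R^3}M(\xi)\hat{\bar u}(\xi)\cdot\overline{\hat{\bar v}(\xi)}\,d\xi=(u\mid h_{\rm d}(v))_{L^2(\Omega)}$, i.e.\ $h_{\rm d}$ is \emph{symmetric} (and non-positive); the overall minus sign inside $M$ cannot create antisymmetry, so ``the sign convention built into the multiplier'' does not produce the stated minus. Indeed, antisymmetry would force $(h_{\rm d}(u)\mid u)_{L^2(\Omega)}=0$ for every $u$, contradicting $(h_{\rm d}(u)\mid u)_{L^2(\Omega)}=-\|h_{\rm d}(u)\|_{L^2(\R^3)}^2\le0$ (typically strict), which is precisely the non-positivity the paper relies on in Section~\ref{sec:proofHypdissip}. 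Your alternative route via $h_{\rm d}(u)=-\nabla\phi_u$ gives the same conclusion, $(h_{\rm d}(u)\mid v)_{L^2(\Omega)}=-\int_{\R^3}\nabla\phi_u\cdot\nabla\phi_v$, symmetric in $u,v$. So what your argument actually proves is the self-adjointness identity without the minus sign; as a proof of the statement as printed (whose sign appears to be a typo), the step fails, and a correct write-up should either prove the symmetric identity or explain the discrepancy rather than assert that the signs work out.
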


In addition to the usual Sobolev embeddings, 
we recall the following estimate, which results 
from the coercivity of the operator $A = 1-\Delta$, 
with domain $D(A) = \{ m \in H^2(\Omega) \mid 
\dn m_{|_{\d\Omega}}=0 \}$ (see for example \cite{MR902801})
\begin{lemma} \label{lem:sobolev}
Let $\Omega$ be a smooth bounded open set in $\mathbb{R}^ 3$. 
There exists a constant $C>0$ such that for all $u$ in 
$H_N^2(\Omega)$ one has
\begin{equation*}
\Vert u \Vert_{L^\infty(\Omega)} \le C\left(	\Vert u\Vert_{L^2(\Omega)}^2 + \Vert\Delta u\Vert_{L^2(\Omega)}^2\right)^{\frac{1}{2}}.
\end{equation*}
\end{lemma}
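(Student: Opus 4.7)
The plan is to combine two classical ingredients: an elliptic $H^2$ regularity estimate for the Neumann Laplacian, and the Sobolev embedding $H^2 \hookrightarrow L^\infty$ in three space dimensions.

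First, I would establish the equivalence of norms
\[
\|u\|_{H^2(\Omega)} \le C \bigl( \|u\|_{L^2(\Omega)} + \|\Delta u\|_{L^2(\Omega)} \bigr)
\qquad \text{for all } u \in H^2_N(\Omega).
\]
This is a consequence of the coercivity and $H^2$-regularity theory of the operator $A = 1 - \Delta$ with domain $D(A) = H^2_N(\Omega)$, as recalled in the reference \cite{MR902801} cited just above the statement. Concretely, for $u \in H^2_N(\Omega)$, one sets $f := u - \Delta u \in L^2(\Omega)$; since $\partial\Omega$ is smooth, the unique weak solution in $H^1(\Omega)$ of the Neumann problem $-\Delta v + v = f$, $\partial_\nu v_{|\partial\Omega} = 0$, lies in $H^2(\Omega)$ with $\|v\|_{H^2} \le C \|f\|_{L^2}$, and $v = u$ by uniqueness. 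The claimed estimate then follows from the triangle inequality $\|f\|_{L^2} \le \|u\|_{L^2} + \|\Delta u\|_{L^2}$.

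Second, I would apply the standard Sobolev embedding $H^2(\Omega) \hookrightarrow L^\infty(\Omega)$, valid because $\Omega$ is a bounded smooth (hence Lipschitz) domain in $\R^3$ and $2 > 3/2$. Chaining the two estimates gives
\[
\|u\|_{L^\infty(\Omega)} \le C' \|u\|_{H^2(\Omega)} \le C'' \bigl( \|u\|_{L^2(\Omega)} + \|\Delta u\|_{L^2(\Omega)} \bigr),
\]
and since $a+b \le \sqrt{2}\,(a^2+b^2)^{1/2}$ for nonnegative $a,b$, the right-hand side is bounded by a constant times $(\|u\|_{L^2}^2 + \|\Delta u\|_{L^2}^2)^{1/2}$, which is the asserted form.

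No step is genuinely the hard part here; the whole content rests on citing the elliptic $H^2$ regularity for the Neumann Laplacian on a smooth bounded domain. If one wanted a self-contained argument, the only mildly delicate point would be this elliptic regularity, which is usually proved by interior estimates plus a localization/straightening-of-the-boundary argument handling the Neumann condition. Given that the excerpt already points to \cite{MR902801} for the coercivity of $A$ and its domain, I would simply quote the regularity result from there.
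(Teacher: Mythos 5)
Your argument is correct and follows exactly the route the paper indicates: the lemma is stated in the paper without a written-out proof, just as "results from the coercivity of the operator $A=1-\Delta$" with domain $H^2_N(\Omega)$, together with the usual Sobolev embeddings, citing \cite{MR902801}. Your proposal simply fleshes out this one-line remark — elliptic $H^2$ regularity for the Neumann problem gives $\|u\|_{H^2}\lesssim\|u\|_{L^2}+\|\Delta u\|_{L^2}$, and then $H^2(\Omega)\hookrightarrow L^\infty(\Omega)$ in dimension three — so it is the same approach, just written in detail.
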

In the sequel, we will need the following definition.
\begin{definition} \label{def:Pk}
Let $\Omega$ be a smooth bounded open set in $\mathbb{R}^ 3$. 
For $k\in\N^\star$, let $P_k$ be the $L^2(\Omega)$-orthogonal 
projection onto $V_k$, the vector space spanned by the first 
$k$ eigenfunctions of $A=1-\Delta$, with domain 
$D(A) = \{ m \in H^2(\Omega) \mid \dn m_{|_{\d\Omega}}=0 \}$. 
\end{definition}
The family of operators $(P_k)_{k\in\mathbb{N}}$ satisfies 
useful properties:
\begin{lemma} \label{lem:propPk}
The following properties are true.
\begin{itemize}
\item [(i)] $\forall k \in \N^\star$, $\forall u\in D(A)$, 
$\Delta P_k u = P_k \Delta u$,
\item [(ii)] $\forall k \in \N^\star$, $\forall s\in\mathbb{N}$, 
$\forall u\in H^s(\Omega)$, $P_k u \in H^s(\Omega)$ \\
(and $P_k u \in H^s_N(\Omega)$ when $s\ge2$),
\item[(iii)] $\forall s\in\mathbb{N}$, 
$\displaystyle \lim_{k\rightarrow\infty}
\Vert (1-P_k) u\Vert_{H^s(\Omega)}=0$ 
for all $u \in H^s(\Omega)$ when $s=0,1$, 
and for all $u \in H^s_N(\Omega)$ when $s\ge2$.
\end{itemize}
\end{lemma}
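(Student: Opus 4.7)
My proof plan rests entirely on the spectral decomposition of the self-adjoint, unbounded operator $A = 1-\Delta$ on $L^2(\Omega)$ (acting componentwise on $\R^N$-valued maps) with domain $D(A) = H^2_N(\Omega)$. Because $\Omega$ is smooth and bounded, $A$ is self-adjoint, satisfies $A \ge 1$, and has compact resolvent, so one obtains an $L^2$-orthonormal Hilbert basis $(e_j)_{j\ge 1}$ of eigenfunctions with eigenvalues $1 \le \lambda_1 \le \lambda_2 \le \cdots \to +\infty$. Elliptic regularity up to the smooth boundary gives $e_j \in C^\infty(\overline\Omega)$ with $\partial_\nu(\Delta^n e_j)|_{\partial\Omega}=0$ for every $n\ge 0$. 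Writing $u_j = (u\mid e_j)_{L^2}$ and $u = \sum_j u_j e_j$, we have $P_k u = \sum_{j\le k} u_j e_j \in V_k$.

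For (i), I would just compute. From $A e_j = \lambda_j e_j$ we get $\Delta e_j = (1-\lambda_j) e_j$, and self-adjointness of $A$ on $D(A)$ gives $(\Delta u\mid e_j) = (u \mid \Delta e_j) = (1-\lambda_j) u_j$ for any $u\in D(A)$, so
\[
\Delta P_k u \;=\; \sum_{j\le k} u_j \Delta e_j \;=\; \sum_{j\le k}(1-\lambda_j)u_j e_j \;=\; P_k \Delta u.
\]
Part (ii) then comes for free: $P_k u$ is a finite linear combination of smooth eigenfunctions, hence lies in $C^\infty(\overline\Omega)\subset H^s(\Omega)$ for every $s$, and since each $e_j$ satisfies $\partial_\nu e_j|_{\partial\Omega}=0$, so does $P_k u$, giving $P_k u \in H^s_N(\Omega)$ as soon as $s \ge 2$.

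The substantive item is (iii). For $s=0$, Parseval yields $\|(1-P_k)u\|_{L^2}^2 = \sum_{j>k} u_j^2$, the tail of a convergent series. For $s=1$, I would use that the quadratic form of $A$ has form domain exactly $H^1(\Omega)$ with $\|u\|_{H^1}^2 = (Au\mid u)_{L^2} = \sum_j \lambda_j u_j^2$, whence $\|(1-P_k)u\|_{H^1}^2 = \sum_{j>k}\lambda_j u_j^2 \to 0$. For $s\ge 2$ I would proceed by induction on $s$, combining part (i) with the elliptic regularity estimate
\[
\|v\|_{H^s(\Omega)} \;\le\; C\bigl(\|v\|_{L^2(\Omega)} + \|A v\|_{H^{s-2}(\Omega)}\bigr)
\]
for $v \in H^s(\Omega)\cap H^2_N(\Omega)$: applied to $v=(1-P_k)u$, which still satisfies the Neumann condition by (ii), this reduces the claim at level $s$ to $\|(1-P_k)A u\|_{H^{s-2}}\to 0$, which is the induction hypothesis.

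The step that will require the most care is this induction in (iii). For $s=2$ everything is clean because $H^2_N(\Omega) = D(A)$ and $Au\in L^2$, so the $s=0$ case closes the argument. For $s\ge 4$, however, the regularity bound wants $Au$ to sit in a subspace of $H^{s-2}$ with enough boundary compatibility for the induction hypothesis to apply, whereas $H^s_N$ as defined here imposes only the single Neumann condition $\partial_\nu u=0$. For the range of $s$ actually needed in the sequel ($s\le 2$) this is not an issue, and in general one closes the gap by approximating $u$ in $H^s$ by smooth, fully compatible functions.
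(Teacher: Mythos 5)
Your proof is correct in substance and, for items (i) and (ii), coincides with the paper's: the paper carries out the integration by parts explicitly to obtain $(\Delta u\mid\psi_j)=(u\mid\Delta\psi_j)$, which is exactly the self-adjointness you invoke, and then reads off $P_k\Delta u=\Delta P_k u$; (ii) is attributed, as you do, to the smoothness and boundary behaviour of the eigenfunctions.

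For (iii) you take a genuinely different route. The paper simply cites the norm equivalence $\|u\|_{H^s(\Omega)}\sim\bigl(\sum_j(1+\lambda_j^s)|(u\mid\psi_j)_{L^2}|^2\bigr)^{1/2}$ and lets the tail of a convergent series do the work, whereas you rebuild that equivalence by hand: Parseval for $s=0$, the form domain of $A$ for $s=1$, then an induction driven by $\|v\|_{H^s}\lesssim\|v\|_{L^2}+\|Av\|_{H^{s-2}}$ and the commutation $A(1-P_k)=(1-P_k)A$ from (i). The paper's route is shorter but leans on an unreferenced equivalence theorem; yours is longer but self-contained and makes the role of (i) transparent. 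Both routes close cleanly for $s\le 3$, which is all the paper uses ($s=3$ works because $A u\in H^1$ and the $s=1$ case applies without any boundary condition on $Au$).

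One point worth sharpening. You correctly flag that for $s\ge 4$ the induction stalls: you would need $\partial_\nu(Au)=\partial_\nu\Delta u=0$ to place $Au$ in $H^{s-2}_N(\Omega)$, and $H^s_N(\Omega)$ does not give that. This is not merely a defect of your method — the statement itself fails for $s\ge 4$ as written. If $u\in H^4_N(\Omega)$ has $\partial_\nu\Delta u\ne 0$ on $\partial\Omega$, then $P_ku$ (a finite combination of eigenfunctions) always has $\partial_\nu\Delta P_ku=0$, so by trace continuity $P_ku$ cannot converge to $u$ in $H^4(\Omega)$. Hence the paper's claimed norm equivalence also only holds on $D(A^{s/2})$, not on all of $H^s_N(\Omega)$, and both arguments implicitly restrict to $s\le 3$. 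Your closing remark about approximating $u$ by ``fully compatible'' functions would not rescue the statement for $s\ge 4$, since the obstruction is a trace constraint on the limit, not a density issue; the honest fix is to restrict (iii) to $s\le 3$ or to replace $H^s_N(\Omega)$ by $D(A^{s/2})$.
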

\begin{proof}
(i) For all $u$ in $D(A)$, $k$ in $\mathbb{N}^\star$, one has
$$
P_k \Delta u = \sum_{j=1}^k (\Delta u \mid \psi_j)_{L^2} \psi_j,
$$
with $(\psi_i)_{i\in\mathbb{N}}$ the $L^2$-orthonormal basis 
of the eigenvectors of $\Delta$ associated to the eigenvalues 
$(\lambda_i)_{i\in\mathbb{N}}$. 
Then, using the vanishing Neumann boundary conditions, 
$$
(\Delta u \mid \psi_j)_{L^2} = - \int_\Omega \nabla u \cdot \nabla \psi_j   = (u \mid \Delta \psi_j)_{L^2},
$$
so that
$$
P_k \Delta u 
= \sum_{j=1}^k \lambda_j (u \mid \psi_j)_{L^2} \psi_j  
= \Delta P_k u.
$$
Point (ii) follows from the regularity properties 
of the family $(\psi_i)_{i\in\mathbb{N}}$. \\
Point (iii) is a consequence of the fact that 
$u \mapsto \left( 
\sum_{j=1}^\infty (1+\lambda_j^s)|(u \mid \psi_j)_{L^2}|^2
\right)^{1/2}$ provides a norm equivalent to the usual one 
on $H^s(\Omega)$. 
\end{proof}

\subsection{About equilibria} \label{sec:equilibria}

\paragraph{Global solutions and equilibria.} 
In \cite[Th. 4.3]{AB09}, in the case of ellipsoidal domains 
$\Omega\subset\R^3$ and under a smallness assumption 
(on $\| h_{\rm ext} \|_{L^\infty}$ and $\| \Delta m_0 \|_{L^2}$), 
Alouges and Beauchard construct global smooth solutions to 
\eqref{eq:ll}. 
Furthermore, these solutions satisfy 
$$\forall T>0, \quad 
\| \Delta m(T) \|_{L^2(\Omega)}^2 + C \int_0^T 
\| \nabla\Delta m \|_{L^2(\Omega)}^2
\le \| \Delta m_0 \|_{L^2(\Omega)}, $$
so that $\nabla\Delta m$ belongs to $L^2((0,\infty)\times\Omega)$. 
This is a part of our assumptions on the equilibrium 
$m_{\rm eq}$, 
when requiring the existence of the global solution $n_0$. 
Saying that $m_{\rm eq}(t_0)$ is an equilibrium 
for \eqref{eq:llt0} means 
\begin{equation}
\label{eq:caractmeq}
\left\{  
\begin{array}{ll}
& m_{\rm eq}(t_0) \wedge h_T(t_0,m_{\rm eq}(t_0)) = 0 ,  \\
& {\dn m_{\rm eq}(t_0)}_{|_{\d\Omega}} = 0 ,
\end{array} \right.
\end{equation} 
and requiring $H^2$ convergence of $n_0(t)$ 
towards $m_{\rm eq}(0)$ as $t$ goes to $\infty$ implies 
that $m_{\rm eq}(0)$ is an equilibrium for \eqref{eq:llt0} 
with $t_0=0$.
\paragraph{Energy minimization.} 
It is worth noting that energy decay occurs along the evolution 
of $n_0(t)$, so that one may hope at least $H^1$ convergence 
of $n_0(t)$ towards some local minimum of the energy, 
as $t$ goes to $\infty$. 
To the Landau-Lifshitz system \eqref{eq:ll} is associated 
the energy 
$$\mathcal{E}(t,m) = \frac{1}{2} \int_\Omega |\nabla m|^2 
- \frac{1}{2} \int_\Omega m \cdot h_{\rm d}(m)  
- \int_\Omega m \cdot h_{\rm ext}(t) ,$$
and when $m$ is solution to \eqref{eq:ll}, we have
$$\frac{\rm d}{\rm dt}\mathcal{E}(t,m(t)) = 
-\frac{\alpha}{\varepsilon} 
\| m(t) \wedge h_T(t,m(t)) \|_{L^2(\Omega)}^2 
- \int_\Omega m(t) \cdot \d_t h_{\rm ext}(t) .$$
Since the exterior magnetic field does not depend on time 
during the evolution of $n_0$, we get 
$$\frac{\rm d}{\rm dt}\mathcal{E}(t,n_0(t)) = 
- \alpha \| n_0(t) \wedge h_T(t,n_0(t)) \|_{L^2(\Omega)}^2.$$

In the case of ellipsoidal domains, 
special configurations are available. 
See \cite{Osborn45}, and references therein: 
there exists a real $3\times3$ definite 
positive diagonalizable matrix $D$ giving the demagnetizing field 
resulting from any magnetization constant constant over $\Omega$: 
$$
\forall v\in\R^3, \quad h_{\rm d}(v)_{|_\Omega} \equiv -Dv .
$$
Hence, if $u\in S^2$ is an eigenvector of $D$ associated to the 
eigenvalue $d>0$, and if the exterior magnetic field is 
$h_{\rm ext} = \lambda u$ for some $\lambda>0$ 
(or $h_{\rm ext}(x) = \lambda \chi(x) u$ for some 
$\chi \in C^\infty_c(\R^3,\lbrack 0,1 \rbrack)$ to get a spatially 
localized field), then the system possesses two explicit equilibria 
$m_{\rm eq}^+$ and $m_{\rm eq}^-$:
\begin{equation} \label{eq:defmeqpm}
m_{\rm eq}^\pm = \pm u.
\end{equation}
One easily computes the energy associated to perturbations of these 
equilibria: for all $\delta\in H^2_N(\Omega,\R^3)$ such that 
$|m_{\rm eq}^\pm+\delta|=1$ a.e.,
$$
\mathcal{E}(m_{\rm eq}^\pm+\delta) - \mathcal{E}(m_{\rm eq}^\pm) = 
\frac{1}{2} \int_\Omega |\nabla\delta|^2 
- \frac{1}{2} \int_\Omega \delta \cdot h_{\rm d}(\delta) 
+ \frac{1}{2} (\pm\lambda-d) \int_\Omega |\delta|^2.  
$$
The first two terms are non-negative, so that for $\lambda$ 
large enough ($\lambda>d$), $m_{\rm eq}^+$ is a global minimum 
of $\mathcal{E}$; but for $\lambda$ small, it may fail to be 
even a local minimum. Concerning $m_{\rm eq}^-$, for all 
$\lambda>0$, if $d$ is the largest eigenvalue of $D$, 
and $\delta$ is constant in space, then the difference of energies  
above is less than $\displaystyle -\frac{\lambda}{2} 
\int_\Omega |\delta|^2$, thus negative, whereas for $\delta$ 
with large variations, the gradient term dominates, 
and the energy difference becomes positive. 
Hence, $m_{\rm eq}^-$ is always a saddle point for $\mathcal{E}$.

\paragraph{The dissipation property~\eqref{eq:hypdissip}.}  
We have the following lemma, the proof of which is postponed 
to Section~\ref{sec:proofHypdissip}:
\begin{lemma} \label{lem:dissip}
For $\lambda>0$ large enough, the equilibrium 
$m_{\rm eq}^+$ from \eqref{eq:defmeqpm} satisfies 
the dissipation property~\eqref{eq:hypdissip} 
(for some constant $C_{\rm lin}$ depending on $\lambda$).
\end{lemma}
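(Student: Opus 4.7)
The plan is to write out the linearized operator $\mathcal{L}(t,m_{\rm eq}^+)$ explicitly and estimate $(\mathcal{L}\delta,\delta)_{H^2}$ by hand, exploiting the rigid structure of this example: $u := m_{\rm eq}^+$ is constant in space, $h_T(t,u) = -Du + \lambda u = (\lambda-d)u$ is parallel to $u$, so $u\wedge h_T(u)=0$, and $\mathcal{L}$ is independent of $t$ (so the $\sup_t$ in \eqref{eq:hypdissip} is automatic). Linearizing the right-hand side of \eqref{eq:llt0} around $u$ and using the identities $u\wedge(\delta\wedge u) = \delta-(u\cdot\delta)u$ and $u\wedge(u\wedge w) = (u\cdot w)u-w$ gives
\begin{equation*}
\mathcal{L}\delta = (\lambda-d)\,\delta\wedge u + u\wedge(\Delta\delta + h_{\rm d}(\delta)) - \alpha(\lambda-d)\bigl(\delta-(u\cdot\delta)u\bigr) + \alpha\bigl(\Delta\delta + h_{\rm d}(\delta) - (u\cdot(\Delta\delta + h_{\rm d}(\delta)))u\bigr).
\end{equation*}
Working with the equivalent norm $\|\delta\|_{H^2}^2 \simeq \|\delta\|_{L^2}^2 + \|\Delta\delta\|_{L^2}^2$ on $H^2_N(\Omega)$, I would compute $(\mathcal{L}\delta,\delta)_{H^2} = (\mathcal{L}\delta,\delta)_{L^2} + (\Delta\mathcal{L}\delta,\Delta\delta)_{L^2}$; since $u$ is constant, $\Delta$ commutes with $w\mapsto u\wedge w$ and $w\mapsto (u\cdot w)u$, so the two levels are structurally identical.

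Next I would sort the resulting terms. The \emph{good} contributions are: (a) $-\alpha(\lambda-d)\|\delta_\perp\|_{L^2}^2$ and its $\Delta$-analogue from the third term; since the constraint $|u+\delta|=1$ forces $u\cdot\delta = -|\delta|^2/2$, Lemma~\ref{lem:sobolev} gives $\|u\cdot\delta\|_{L^2}^2 \le C\eta^2\|\delta\|_{L^2}^2$, so these absorb $\tfrac{1}{2}\alpha(\lambda-d)(\|\delta\|_{L^2}^2 + \|\Delta\delta\|_{L^2}^2)$ for $\eta$ small; (b) $-\alpha\|\nabla\delta\|_{L^2}^2 - \alpha\|\nabla\Delta\delta\|_{L^2}^2$ from the Laplacian pieces of the fourth term, obtained by integrating by parts using $\dn\delta_{|_{\d\Omega}} = \dn\Delta\delta_{|_{\d\Omega}} = 0$; (c) $\alpha(h_{\rm d}(\delta),\delta)_{L^2}\le 0$, which is non-positive by the Fourier representation. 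Several further terms vanish exactly: $(\lambda-d)(\delta\wedge u,\delta)_{L^2}=0$ pointwise, and $(u\wedge\Delta\delta,\delta)_{L^2} = \int_\Omega u\cdot(\Delta\delta\wedge\delta)$ vanishes after integrating $\d_i$ by parts (the boundary term dies by $\dn\delta_{|_{\d\Omega}}=0$, the interior term by $\d_i\delta\wedge\d_i\delta=0$); the analogous $\Delta$-level statement uses $\dn\Delta\delta_{|_{\d\Omega}}=0$.

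The remaining \emph{bad} terms --- the non-local cross-terms $(u\wedge h_{\rm d}(\delta),\delta)_{L^2}$ and $(u\wedge\Delta h_{\rm d}(\delta),\Delta\delta)_{L^2}$, the normal-projection pieces involving $u\cdot(\Delta\delta+h_{\rm d}(\delta))$, and the cubic corrections coming from $u\cdot\delta = -|\delta|^2/2$ being nonzero --- are each bounded by $C\|\delta\|_{H^2}^2$ with $C$ \emph{independent of $\lambda$}: the non-local ones are controlled crudely via the multiplier bound $\|h_{\rm d}(\delta)\|_{H^s(\R^3)} \le \|\delta\|_{H^s(\Omega)}$ from the lemma in Section~\ref{sec:fctalfacts} applied at $s=0,2$, and the cubic corrections are $O(\eta\|\delta\|_{H^2}^2)$ by Lemma~\ref{lem:sobolev}. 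Choosing first $\lambda$ large enough to make $\tfrac{1}{2}\alpha(\lambda-d)$ exceed twice this $\lambda$-independent $C$, and then $\eta$ small, yields \eqref{eq:hypdissip} with $C_{\rm lin}=\tfrac{1}{4}\alpha(\lambda-d)$. The main technical nuisance I anticipate is precisely the non-local demagnetizing field at the $H^2$ level: since $\Delta$ does not commute with $h_{\rm d}$ in any clean way (distributionally $\Delta h_{\rm d}(\delta) = -\nabla\DIV\bar\delta$ on $\R^3$, with surface contributions from the extension by zero), no exact cancellation is available and the only robust option is to fold all $h_{\rm d}$-induced $H^2$ terms into the $\lambda$-independent remainder, which is what forces the threshold ``$\lambda$ large'' rather than merely $\lambda>d$ (the threshold at which $m_{\rm eq}^+$ becomes a local energy minimum).
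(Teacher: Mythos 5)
Your proof takes the same route as the paper's: compute $\mathcal{L}(m_{\rm eq}^+)\delta$ explicitly for constant $u$, perform separate $L^2$- and $\Delta$-level estimates using the equivalent $H^2_N$ norm, exploit the constraint $u\cdot\delta = -|\delta|^2/2$, and beat the $\lambda$-independent remainders by the $-\alpha(\lambda-d)$ coercivity for $\lambda$ large. There is, however, an error in your explicit formula for $\mathcal{L}\delta$: the extra term $\alpha\bigl(\Delta\delta - (u\cdot\Delta\delta)u\bigr)$ should not be there. It arises because you linearize the full right-hand side of \eqref{eq:llt0}, in particular $-\alpha m\wedge(m\wedge\overline{\Delta m})$; but the paper's $\mathcal{L}$ in \eqref{eq:defLeq} is the linearization of the reduced nonlinearity $\mathcal{F}$ from \eqref{eq:llparab}, where $\alpha\Delta m$ has already been moved to the left-hand side, so $\mathcal{L}$ contains $m_{\rm eq}\wedge\Delta\delta$ but no $\alpha\Delta\delta$. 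Compare with \eqref{Lmeqpm}: for the $+$ sign it reads $\mathcal{L}(m_{\rm eq}^+)\delta = (\lambda-d)\,\delta\wedge u + u\wedge(\Delta\delta + h_{\rm d}(\delta)) - \alpha(\lambda-d)\,u\wedge(\delta\wedge u) - \alpha\, u\wedge(u\wedge h_{\rm d}(\delta))$. Consequently your ``good contribution'' (b), namely $-\alpha\|\nabla\delta\|_{H^2}^2$, is not part of $(\mathcal{L}\delta\mid\delta)_{H^2}$; in the actual energy estimate \eqref{eq:estimdeltaepsH2} that gain appears separately, coming from the $-\alpha\Delta$ on the left-hand side of \eqref{eq:evolm-meq}. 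Fortunately, as you note yourself, the coercivity that dominates the $\lambda$-independent remainder comes entirely from (a), i.e.\ from $-\alpha(\lambda-d)$, not from (b); so once the spurious $\alpha\Delta\delta$ is dropped (which also removes the $u\cdot\Delta\delta$ normal-projection pieces from your ``bad'' list), the argument closes exactly as in the paper's \eqref{eq:estimH2Lmeq+}: $\bigl(\mathcal{L}(m_{\rm eq}^+)\delta\mid\delta\bigr)_{H^2} \le -(\alpha(\lambda-d)-c)\|\delta\|_{H^2}^2 + \mathcal{O}(\|\delta\|_{H^2}^3)$, with $c$ independent of $\lambda$. Up to this correction and the cosmetic choice of expanding $u\wedge(\delta\wedge u) = \delta - (u\cdot\delta)u$ where the paper uses $|\delta\wedge u|^2 = |\delta|^2 - |\delta|^4/4$ (two equivalent ways to use the constraint), your proof coincides with the paper's.
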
 
For $m_{\rm eq}^-$, it is shown 
in Section~\ref{sec:proofHypdissip} that for $\lambda$ large, 
we have on the contrary: 
\begin{lemma} \label{lem:nondissip}
For $\lambda>0$ large enough, there exist $C=C(\alpha,\lambda)>0$ 
and $\eta=\eta(\alpha,\lambda)>0$ such that, for all 
$\delta \in C([0,T],H^\infty(\Omega)) 
\text{ with } |m_{\rm eq}+\delta| \equiv 1 \text{ and } 
\dn \delta_{|_{\d\Omega}} = \dn \Delta \delta_{|_{\d\Omega}} = 0$,  
when $\|\delta\|_{H^2(\Omega)} \le \eta$, we have:
$$
\forall t \in [0,T], \quad 
\Big( {\mathcal L}(t,m_{\rm eq}^-(t))\delta(t) \mid 
\delta(t) \Big)_{H^2(\Omega)} 
\ge C \|\delta\|_{H^2(\Omega)}^2.
$$ 
\end{lemma}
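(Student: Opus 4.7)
The plan is to mirror the proof of Lemma~\ref{lem:dissip} for $m_{\rm eq}^+$, tracking the single sign change that distinguishes the two equilibria. The decisive observation is that, since $m_{\rm eq}^-=-u$ is constant, $\Delta m_{\rm eq}^-=0$ and $h_{\rm d}(m_{\rm eq}^-)=du$, so $h_T(m_{\rm eq}^-)=(d+\lambda)u=-(d+\lambda)m_{\rm eq}^-$; thus $m_{\rm eq}^-\cdot h_T(m_{\rm eq}^-)=-(d+\lambda)$, negative, while $m_{\rm eq}^+\cdot h_T(m_{\rm eq}^+)=\lambda-d$, positive for $\lambda>d$. This scalar controls the sign of the multiplicative coefficient in the linearized Gilbert damping.

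Using the identity $a\wedge(b\wedge c)=(a\cdot c)b-(a\cdot b)c$ and the equilibrium condition $m_{\rm eq}^-\wedge h_T(m_{\rm eq}^-)=0$, a direct computation gives
\begin{equation*}
\mathcal{L}(t,m_{\rm eq}^-)\delta=\delta\wedge h_T(m_{\rm eq}^-)+m_{\rm eq}^-\wedge(\Delta\delta+h_{\rm d}(\delta))+\alpha(d+\lambda)\delta_\perp+\alpha\bigl(\Delta\delta+h_{\rm d}(\delta)\bigr)_\perp,
\end{equation*}
where $\delta_\perp:=\delta-(u\cdot\delta)u$ is the projection onto $u^\perp$. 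The only structural difference with the linearization around $m_{\rm eq}^+$ is the sign of the coefficient of the pure multiplication, here $+\alpha(d+\lambda)$ versus $-\alpha(\lambda-d)$ at $m_{\rm eq}^+$; all other terms coincide, because $(m_{\rm eq}^\pm)^\perp=u^\perp$.

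I would then re-run the term-by-term computation of $(\mathcal{L}(t,m_{\rm eq}^-)\delta\mid\delta)_{H^2(\Omega)}$ exactly as in the proof of Lemma~\ref{lem:dissip}, using the boundary conditions $\dn\delta=\dn\Delta\delta=0$ for all integrations by parts. The two skew (Hamiltonian) terms $\delta\wedge h_T(m_{\rm eq}^-)=(d+\lambda)\,\delta\wedge u$ and $m_{\rm eq}^-\wedge(\Delta+h_{\rm d})\delta$ contribute either zero (from the pointwise identity $(v\wedge w)\cdot v=0$, together with the fact that $u$ is constant, so $\Delta$ and $\wedge u$ commute) or at most a $\lambda$-independent $O(\|\delta\|_{H^2}^2)$ error from $h_{\rm d}$. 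The elliptic dissipative piece $\alpha(\Delta+h_{\rm d})\delta_\perp$ produces a $\lambda$-independent contribution. Finally, the multiplicative piece yields $+\alpha(d+\lambda)\|\delta_\perp\|_{H^2}^2$, and the constraint $|m_{\rm eq}^-+\delta|=1$ (which gives $u\cdot\delta=\tfrac12|\delta|^2$) together with Lemma~\ref{lem:sobolev} forces $\|\delta_\perp\|_{H^2}^2=(1+O(\eta))\|\delta\|_{H^2}^2$. Collecting, there exists a $\lambda$-independent $K=K(\alpha,\Omega)>0$ such that
\begin{equation*}
(\mathcal{L}(t,m_{\rm eq}^-)\delta\mid\delta)_{H^2(\Omega)}\ge\bigl(\alpha(d+\lambda)-K\bigr)\|\delta\|_{H^2(\Omega)}^2,
\end{equation*}
provided $\eta$ is chosen small enough to absorb the $O(\eta)$ corrections. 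Picking $\lambda$ large so that $\alpha(d+\lambda)>K$ then gives the conclusion with $C(\alpha,\lambda):=\alpha(d+\lambda)-K>0$.

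The main obstacle is essentially absent: since the proof of Lemma~\ref{lem:dissip} has already carried out precisely the same estimates (with the opposite sign on the dominant term), the present lemma is obtained by re-running that argument and tracking the single sign change. The only subtlety worth flagging is that the Hamiltonian term $\delta\wedge h_T(m_{\rm eq}^-)=(d+\lambda)\,\delta\wedge u$ carries the $\lambda$-growing prefactor $(d+\lambda)$; one must verify that its pairing with $\delta$ in $(\cdot\mid\cdot)_{H^2}$ produces no $\lambda$-dependent contribution on the ``wrong'' side, which it does not, because every integrand resulting from successive integrations by parts under the stated boundary conditions retains a pointwise form $(v\wedge u)\cdot v$ and hence vanishes identically.
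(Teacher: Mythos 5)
Your overall strategy — rerun the computation of Lemma~\ref{lem:dissip} around $m_{\rm eq}^-=-u$ and track the single sign flip caused by $m_{\rm eq}^-\cdot h_T(t,m_{\rm eq}^-)=-(d+\lambda)<0$ — is exactly the paper's (see \eqref{Lmeqpm}, \eqref{eq:estimL2Lmeq-}, \eqref{eq:estimL2DeltaLmeq-}). However, there is a concrete error in your formula for the linearized operator, and your treatment of the resulting extra term is not correct, so as written the argument does not establish the lemma.

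The operator ${\mathcal L}(t,m_{\rm eq})$ in \eqref{eq:defLeq} is the linearization of $\mathcal{F}$ from \eqref{eq:defF}, \emph{after} the paper rewrites the Landau--Lifshitz right-hand side in the form $\alpha\Delta m+\mathcal F(t,m)$ (using $-\alpha m\wedge(m\wedge\Delta m)=\alpha\Delta m+\alpha|\nabla m|^2m$ for $|m|\equiv 1$). Consequently ${\mathcal L}$ contains $-\alpha\,m_{\rm eq}\wedge(m_{\rm eq}\wedge h_{\rm d}(\delta))$ but \emph{not} $-\alpha\,m_{\rm eq}\wedge(m_{\rm eq}\wedge\Delta\delta)$. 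Specializing to $m_{\rm eq}^-=-u$ gives, as in \eqref{Lmeqpm},
\begin{equation*}
{\mathcal L}(m_{\rm eq}^-)\delta=(d+\lambda)\,\delta\wedge u-u\wedge\bigl(\Delta\delta+h_{\rm d}(\delta)\bigr)+\alpha(d+\lambda)\,\delta_\perp+\alpha\bigl(h_{\rm d}(\delta)\bigr)_\perp,
\end{equation*}
whereas your formula has $\alpha\bigl(\Delta\delta+h_{\rm d}(\delta)\bigr)_\perp$ in place of $\alpha\bigl(h_{\rm d}(\delta)\bigr)_\perp$, i.e.\ an extra term $\alpha(\Delta\delta)_\perp$.

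This is not a benign discrepancy: you then assert that ``the elliptic dissipative piece $\alpha(\Delta+h_{\rm d})\delta_\perp$ produces a $\lambda$-independent contribution'' and absorb it into the constant $K$. But using the boundary conditions $\dn\delta=\dn\Delta\delta=0$ to integrate by parts, the $H^2$-pairing of $\alpha(\Delta\delta)_\perp$ with $\delta$ yields a principal contribution $-\alpha\bigl(\|\nabla\delta\|_{L^2(\Omega)}^2+\|\nabla\Delta\delta\|_{L^2(\Omega)}^2\bigr)$ plus lower-order terms. This is \emph{not} bounded below by $-K\|\delta\|_{H^2(\Omega)}^2$ uniformly in $\delta$ (the $\|\nabla\Delta\delta\|_{L^2}^2$ term is one order higher), so with your version of ${\mathcal L}$ the inequality $({\mathcal L}\delta\mid\delta)_{H^2}\ge\bigl(\alpha(d+\lambda)-K\bigr)\|\delta\|_{H^2}^2$ would be false. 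Once you drop the spurious $\alpha(\Delta\delta)_\perp$ and use the operator in \eqref{Lmeqpm}, the remaining steps you sketch are precisely those of the paper and the conclusion follows.
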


\section{Proof of Theorem~\ref{th:asympt}}
\label{sec:proofTh}

First, consider the solution  
$n_0$ to the Cauchy problem 
\begin{equation*} \left\{
\begin{array}{ll}
& \d_t n_0 = n_0 \wedge h_T(0,n_0) - \alpha n_0 \wedge \Big( n_0 \wedge h_T(0,n_0) \Big) , \\
& \dn {n_0}_{|_{\d\Omega}} = 0 , \\
& {n_0}_{|_{t=0}} = m_0 ,
\end{array} \right.
\end{equation*}
and define $n^\eps$ by
$$\forall t\ge0, \quad n^\eps(t)=n_0(t/\eps).$$
Then, $n^\eps \in C_b([0,\infty),H^2(\Omega))$ (with $\nabla\Delta n^\eps \in 
L^2((0,\infty)\times\Omega)$, and we know that 
\begin{equation} \label{eq:n0cv}
t_\eps/\eps \Tend{\eps}{0} \infty 
\quad \Longrightarrow \quad 
n^\eps(t_\eps) \Tend{\eps}{0} m_{\rm eq}(0) \text{ in } H^2(\Omega) . 
\end{equation}

Next, as in \cite{CF01}, we observe that for smooth functions $m$ 
with constant modulus (w.r.t. $x$), one has 
$m \cdot \Delta m = - 2 |\nabla m|^2$, so that smooth solutions to \eqref{eq:ll} 
equivalently satisfy
\begin{equation}
\label{eq:llparab}
\left\{  
\begin{array}{ll}
& \eps \d_t m^\eps - \alpha \Delta m^\eps = \mathcal{F}(t,m^\eps) , \\
& \dn m^\eps_{|_{\d\Omega}} = 0 , \\
& m^\eps_{|_{t=0}} = m_0 , 
\end{array} \right.
\end{equation}
where
\begin{equation} \label{eq:defF}
\mathcal{F}(t,m) = m \wedge h_T(t,m) + \alpha |\nabla m|^2 m
- \alpha m \wedge \Big( m \wedge (h_{\rm d}(m) + h_{\rm ext}(t)) \Big). 
\end{equation}
Furthermore, smooth ($L^\infty_t H^2_x$) solutions to 
\eqref{eq:llparab} issued from $m_0$ with constant modulus, 
equal to one, are shown to keep the same modulus for all time, 
(due to uniqueness of the solution $a=|m|^2$ to 
$\eps\d_t a = \alpha\Delta a + 2\alpha |\nabla u|^2 (a-1)$, 
$\dn a_{|_{\d\Omega}} = 0$, $a_{|_{t=0}}=1$). We thus solve 
\eqref{eq:llparab} in the Banach space 
$C([0,T],H^2_N(\Omega))$, and deduce from this conservation 
that the solution actually belongs to the space  $C([0,T],H^2_N(\Omega,S^2))$.

It is worth noting that \eqref{eq:ll} is an initial and boundary 
value problem for some quasilinear and parabolic degenerate 
operator, which is seen in \eqref{eq:llparab} as a perturbation 
of a linear and strongly parabolic one. 

Standard energy estimates ensure local-in-time existence and 
uniqueness of solutions continuous in time, with values in 
$H^2(\Omega))$ (with an existence time depending on $\eps$): 
see for example \cite{AB09} or \cite{CF01}.  
By the usual continuation argument, we simply 
need to bound the $H^2$ norm of $m^\eps$ to ensure existence 
up to time $T$. Actually, we shall prove convergence (as $\eps$ 
goes to zero) at the same time, \emph{via} energy estimates. 

We first show that, after some time $t_\eps$ 
of the form $t_\eps = C \eps \ln(1/\eps)$, $m^\eps$ and $n^\eps$ 
are close: $\sup_{[0,t_\eps]} \| m^\eps-n^\eps \|_{H^2(\Omega)}$ 
goes to zero with $\eps$; thus, for $\eps$ small enough, $m^\eps(t_\eps)$ is as close (in $H^2(\Omega)$) to $m_{\rm eq}(0)$ 
as desired. We then use the stability property of $m_{\rm eq}(t)$ 
to show that $m^\eps(t)$ stays close to it, for $t \in [t_\eps,T]$.

\subsection{First step: the initial layer \texorpdfstring{$[0,t_\eps]$}{[0,teps]}}

\subsubsection{Galerkine scheme} \label{sec:galerkine} 
For $k\in\N^\star$, let $P_k$ be the $L^2(\Omega)$-
orthogonal projection onto $V_k$, 
the vector space spanned by the first $k$ eigenfunctions 
of $A=1-\Delta$, with domain 
$D(A) = \{ m \in H^2(\Omega) \mid \dn m_{|_{\d\Omega}}=0 \}$, 
as in Definition~\ref{def:Pk}. 
Define a Galerkine approximation of \eqref{eq:llparab} by:
\begin{equation}
\label{eq:mkeps}
\left\{  
\begin{array}{ll}
& \eps \d_t m_k^\eps - \alpha \Delta m_k^\eps = 
P_k \mathcal{F}(t,m_k^\eps) ,  \\
& {m_k^\eps}_{|_{t=0}} = P_k m_0 .
\end{array} \right.
\end{equation}
The projection $n_k^\eps = P_k n^\eps$ also satisfies
\begin{equation*}
\begin{split}
\eps \d_t n_k^\eps - \alpha \Delta n_k^\eps = 
& P_k \mathcal{F}(0,n_k^\eps) + \alpha [P_k,\Delta] n^\eps 
+ P_k \Big( \mathcal{F}(0,n^\eps) - \mathcal{F}(0,n_k^\eps) \Big) \\
= & P_k \mathcal{F}(0,n_k^\eps) + P_k [P_k,\mathcal{F}(0,\cdot)](n^\eps) ,
\end{split}
\end{equation*}
since for $u \in D(A)$, $P_k \Delta u = \Delta P_k u$, 
according to Lemma~\ref{lem:propPk}.

Now, perform energy estimates (in $L^2$) for $\varphi_k^\eps = m_k^\eps - 
n_k^\eps$, solution to 
\begin{equation}
\label{eq:phikeps}
\left\{
\begin{array}{ll}
& \eps \d_t \varphi_k^\eps - \alpha \Delta \varphi_k^\eps = 
P_k \Big( \mathcal{F}(t,m_k^\eps) - \mathcal{F}(0,n_k^\eps) \Big) 
- P_k [P_k,\mathcal{F}(0,\cdot)](n^\eps) , \\
& {\varphi_k^\eps}_{|_{t=0}} = 0 .
\end{array}
\right.
\end{equation}

\subsubsection{\texorpdfstring{$L^2$}{L2} estimates} 

Take the scalar product (in $L^2(\Omega)$) of $\varphi_k^\eps$ with 
the first equation in \eqref{eq:phikeps} to get
\begin{equation*}
\frac{\eps}{2} \frac{\rm d}{\rm dt} 
\left( \| \varphi_k^\eps \|_{L^2(\Omega)}^2 \right) 
+ \alpha \| \nabla \varphi_k^\eps \|_{L^2(\Omega)}^2 = I_1 + I_2 + I_3 + I_4 , 
\end{equation*}
with
\begin{equation*}
\begin{split}
& I_1 = \Big( \varphi_k^\eps \mid m_k^\eps \wedge h_T(t,m_k^\eps) 
- n_k^\eps \wedge h_T(0,n_k^\eps) \Big)_{L^2(\Omega)} , \\
& I_2 = \alpha \Big( \varphi_k^\eps \mid  
|\nabla m_k^\eps|^2 m_k^\eps - |\nabla n_k^\eps|^2 n_k^\eps \Big)_{L^2(\Omega)} , \\
& I_3 = - \alpha \Big( \varphi_k^\eps \mid 
m_k^\eps \wedge \Big( m_k^\eps \wedge 
(h_{\rm d}(m_k^\eps) + h_{\rm ext}(t)) \Big) \\ 
& \hspace{3cm} - n_k^\eps \wedge \Big( n_k^\eps \wedge 
(h_{\rm d}(n_k^\eps) + h_{\rm ext}(0)) \Big) 
\Big)_{L^2(\Omega)} , \\
& I_4 = \Big( \varphi_k^\eps \mid 
[P_k,\mathcal{F}(0,\cdot)](n^\eps) \Big)_{L^2(\Omega)} .
\end{split}
\end{equation*}

\paragraph{\emph{Estimating $I_1$.}} 
Decompose $m_k^\eps = n_k^\eps + \varphi_k^\eps$. For all $\varphi,h\in\R^3$, 
$\varphi\cdot(\varphi\wedge h)=0$, so that
\begin{equation*}
\begin{split}
I_1 = 
& \Big( \varphi_k^\eps \mid n_k^\eps \wedge \Big( \Delta (n_k^\eps + \varphi_k^\eps) 
+ h_{\rm d}(n_k^\eps + \varphi_k^\eps) + h_{\rm ext}(t) \Big) \\ 
& \qquad\qquad\qquad\qquad -n_k^\eps \wedge \Big( \Delta n_k^\eps  + h_{\rm d}(n_k^\eps) 
+ h_{\rm ext}(0) \Big) \Big)_{L^2(\Omega)} \\
= & \Big( \varphi_k^\eps \mid n_k^\eps \wedge (\Delta \varphi_k^\eps 
+ h_{\rm d}(\varphi_k^\eps)) \Big)_{L^2(\Omega)} 
+ \Big( \varphi_k^\eps \mid n_k^\eps \wedge (h_{\rm ext}(t)-h_{\rm ext}(0)) \Big)_{L^2(\Omega)} .
\end{split}
\end{equation*}
Using the continuity of $h_{\rm d}$ on $L^2(\Omega)$, we get, for some constant $C$ depending on 
$\| \d_t h_{\rm ext} \|_{L^\infty_{t,x}}$ and $\| n_0 \|_{L^\infty((0,\infty),L^2(\Omega))}$: 
\begin{equation} \label{eq:estimI1}
I_1 \le C \| \varphi_k^\eps \|_{L^2(\Omega)} 
\Big( \| \varphi_k^\eps \|_{H^2(\Omega)} + t \Big).
\end{equation}

\paragraph{\emph{Estimating $I_2$.}}  
Write 
\begin{equation*}
\begin{split}
|\nabla m_k^\eps|^2 m_k^\eps - |\nabla n_k^\eps|^2 n_k^\eps
& = (|\nabla m_k^\eps|^2-|\nabla n_k^\eps|^2)m_k^\eps + |\nabla n_k^\eps|^2\varphi_k^\eps \\
& = (\nabla (2n_k^\eps +\varphi_k^\eps) \cdot \nabla \varphi_k^\eps) (n_k^\eps +\varphi_k^\eps)  
+ |\nabla n_k^\eps|^2\varphi_k^\eps. 
\end{split}
\end{equation*}
Then, use Sobolev's embeddings, such as
\begin{equation*}
\begin{split}
\Big( \varphi_k^\eps \mid 
( \nabla n_k^\eps \cdot \nabla \varphi_k^\eps) n_k^\eps \Big)_{L^2(\Omega)} 
& \le \| \varphi_k^\eps \|_{L^\infty(\Omega)} \| \nabla n_k^\eps \|_{L^2(\Omega)} 
\| \nabla\varphi_k^\eps \|_{L^4(\Omega)} \| n_k^\eps \|_{L^4(\Omega)} \\
& \les \| \varphi_k^\eps \|_{H^2(\Omega)} \| n_k^\eps \|_{H^1(\Omega)}
\| \nabla\varphi_k^\eps \|_{H^1(\Omega)} \| n_k^\eps \|_{H^1(\Omega)} ,
\end{split}
\end{equation*}
and 
\begin{equation*}
\Big( \varphi_k^\eps \mid |\nabla n_k^\eps|^2\varphi_k^\eps \Big)_{L^2(\Omega)} 
\le \| \varphi_k^\eps \|_{L^\infty(\Omega)}^2 \| \nabla n_k^\eps \|_{L^2(\Omega)}^2 
\les \| \varphi_k^\eps \|_{H^2(\Omega)}^2 \| n_k^\eps \|_{H^1(\Omega)}^2 ,
\end{equation*}
to get the estimate
\begin{equation} \label{eq:estimI2}
I_2 \le C \| \varphi_k^\eps \|_{H^2(\Omega)} \Big( \| \varphi_k^\eps \|_{H^2(\Omega)} 
+ \| \varphi_k^\eps \|_{H^2(\Omega)}^3 \Big) ,
\end{equation}
for some constant $C$ depending on $\| n_0 \|_{L^\infty((0,\infty),H^1(\Omega))}$.

\paragraph{\emph{Estimating $I_3$.}}  
As for $I_1$, cancellations allow to write
\begin{equation*}
\begin{split}
I_3 = 
& - \alpha \Big( \varphi_k^\eps \mid n_k^\eps \wedge 
\Big( n_k^\eps \wedge h_{\rm d} (\varphi_k^\eps) + \varphi_k^\eps \wedge h_{\rm d} (n_k^\eps) \Big)
+ n_k^\eps \wedge \Big( \varphi_k^\eps \wedge h_{\rm ext}(t) \Big) \Big)_{L^2(\Omega)} \\
& \quad - \alpha \Big( \varphi_k^\eps \mid n_k^\eps \wedge  
\Big( n_k^\eps \wedge (h_{\rm ext}(t)-h_{\rm ext}(0)) \Big) \Big)_{L^2(\Omega)} .
\end{split}
\end{equation*}
Boundedness of $h_{\rm d}$ on $L^p$ for finite $p$ provides the bounds
\begin{equation*}
\| n_k^\eps \wedge ( n_k^\eps \wedge h_{\rm d} (\varphi_k^\eps) ) \|_{L^2(\Omega)} 
\le \| n_k^\eps \|_{L^6(\Omega)}^2 \| \varphi_k^\eps \|_{L^6(\Omega)} ,
\end{equation*}
and 
\begin{equation*}
\| n_k^\eps \wedge ( \varphi_k^\eps \wedge h_{\rm d} (n_k^\eps) ) \|_{L^2(\Omega)} 
\le \| n_k^\eps \|_{L^6(\Omega)}^2 \| \varphi_k^\eps \|_{L^6(\Omega)} .
\end{equation*}
The above $L^6$ norms are controlled by $H^1$ norms. Thus, for some constant $C$ 
depending on $\| h_{\rm ext} \|_{L^\infty_{t,x}}$, $\| \d_t h_{\rm ext} \|_{L^\infty_{t,x}}$ 
and $\| n_0 \|_{L^\infty((0,\infty),H^1(\Omega))}$: 
\begin{equation} \label{eq:estimI3}
I_3 \le C \| \varphi_k^\eps \|_{L^2(\Omega)} \Big( \| \varphi_k^\eps \|_{H^2(\Omega)} + t \Big) .
\end{equation}

\paragraph{\emph{Estimating $I_4$.}} 
Setting 
$r_k^\eps = \| [P_k,\mathcal{F}(0,\cdot)](n_k^\eps) \|_{L^2(\Omega)}^2$, we have:  
\begin{equation} \label{eq:estimI4}
I_4 \le \| \varphi_k^\eps \|_{L^2(\Omega)}^2 + r_k^\eps, 
\mbox{ and } r_k^\eps \Tend{k}{\infty}0 \mbox{ in } L^1(0,T) 
\mbox{ for all } T>0, \mbox{ with } \eps \mbox{ fixed} .
\end{equation}
This is a consequence of the following lemma, the proof of which 
is postponed to Section~\ref{sec:proofLemmacommut}.

\begin{lemma} \label{lem:commut}
For all $T>0$ and $n \in C([0,T],H^2_N(\Omega)) \cap L^2((0,T),H^3(\Omega))$,
$$[P_k,\mathcal{F}(0,\cdot)](n) \Tend{k}{\infty} 0 
\mbox{ in } L^2((0,T),H^1(\Omega)) .$$
\end{lemma}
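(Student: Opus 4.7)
The strategy is the decomposition
\begin{equation*}
[P_k,\mathcal{F}(0,\cdot)](n) = (P_k - I)\mathcal{F}(0,n) + \Big(\mathcal{F}(0,n) - \mathcal{F}(0,P_k n)\Big),
\end{equation*}
in which each piece will be shown to vanish in $L^2((0,T),H^1(\Omega))$. For the first piece, one checks that $\mathcal{F}(0,n)\in L^2((0,T),H^1(\Omega))$: since $\mathcal{F}$ is polynomial in $n$, $\nabla n$, $\Delta n$, $h_{\rm d}(n)$ and $h_{\rm ext}(0)$, the 3D product estimate $H^2(\Omega)\cdot H^1(\Omega)\hookrightarrow H^1(\Omega)$ and the $H^s$-boundedness of $h_{\rm d}$ recalled in Section~\ref{sec:fctalfacts} yield a pointwise bound $\|\mathcal{F}(0,n)\|_{H^1}\le P(\|n\|_{H^2})(1+\|n\|_{H^3})$ for some polynomial $P$, integrable against $dt$ since $n\in C([0,T],H^2_N)\cap L^2((0,T),H^3)$. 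Pointwise $H^1$-convergence $(P_k-I)u\to 0$ from Lemma~\ref{lem:propPk}(iii) with $s=1$, together with a uniform $H^1$-bound on $P_k$ (obtained via the $L^2$-orthonormality of $\{\nabla\psi_j/\sqrt{\lambda_j}\}_{\lambda_j\ne 0}$), then closes this piece by dominated convergence in $t$.

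The delicate piece is $\mathcal{F}(0,n)-\mathcal{F}(0,P_k n)$. Set $n_k=P_k n$ and $\delta_k=n-n_k$. A direct Lipschitz estimate with $\|\delta_k\|_{H^3}$ on the right-hand side is unavailable, because $n(t)$ is only assumed $H^3$ without compatibility of $\Delta n$ at $\partial\Omega$, so that $P_k n\to n$ a priori only in $H^2$. The structural remedy is Lemma~\ref{lem:propPk}(i), which gives $P_k\Delta=\Delta P_k$ on $D(A)$ and hence $\Delta\delta_k=(I-P_k)\Delta n$; the exchange contribution thus decomposes as
\begin{equation*}
n\wedge\Delta n - n_k\wedge\Delta n_k \;=\; \delta_k\wedge\Delta n \;+\; n_k\wedge(I-P_k)\Delta n,
\end{equation*}
and the remaining nonlinear contributions $|\nabla n|^2 n$ and $n\wedge(n\wedge(h_{\rm d}(n)+h_{\rm ext}(0)))$ are expanded analogously via Leibniz. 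In every resulting summand, at most one factor is \emph{difficult}: either $\delta_k$ or $\nabla\delta_k$ (to be controlled in $H^2$), or $(I-P_k)\Delta n$ (to be controlled in $H^1$); the other factors are benign $H^2$-quantities involving $n$ or $n_k$.

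Two uniform ingredients then close the estimate. First, since $t\mapsto n(t)$ is continuous into $H^2_N$, its image is compact, and combining this with the uniform $H^2_N$-boundedness of $P_k$ (which follows from $P_kA=AP_k$ on $D(A)$) and its pointwise convergence to the identity on $H^2_N$ (Lemma~\ref{lem:propPk}(iii) with $s=2$) yields $\sup_{t\in[0,T]}\|\delta_k(t)\|_{H^2}\to 0$. Second, for a.e.\ $t$ we have $\Delta n(t)\in H^1$ and $(I-P_k)\Delta n(t)\to 0$ in $H^1$, uniformly dominated by $C\|n(t)\|_{H^3}\in L^2((0,T))$; dominated convergence then gives $\|(I-P_k)\Delta n\|_{L^2((0,T),H^1)}\to 0$. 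Combining with the 3D product estimates, each summand in the expansion is bounded by a constant times either $\sup_t\|\delta_k\|_{H^2}\cdot\|n\|_{L^2((0,T),H^3)}$ or $\|n\|_{L^\infty((0,T),H^2)}\cdot\|(I-P_k)\Delta n\|_{L^2((0,T),H^1)}$, both vanishing as $k\to\infty$. The main obstacle is precisely to sidestep the absent $H^3$-convergence $P_k n\to n$, and this is achieved by the commutation trick $\Delta\delta_k=(I-P_k)\Delta n$, for which only the $H^1$-strong convergence of $P_k$ from Lemma~\ref{lem:propPk}(iii) is needed.
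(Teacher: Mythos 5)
Your proof uses the same decomposition as the paper, $[P_k,\mathcal{F}(0,\cdot)](n)=(P_k-I)\mathcal{F}(0,n)+\big(\mathcal{F}(0,n)-\mathcal{F}(0,P_kn)\big)$, and your treatment of the first piece (pointwise $H^1$-convergence of $P_k$, uniform $H^1$-bound, dominated convergence in $t$) matches the paper's. For the second piece, however, you introduce a detour motivated by a concern that is in fact unfounded. You claim that $P_k n(t)\to n(t)$ holds ``a priori only in $H^2$'' because $\Delta n$ lacks Neumann compatibility at $\partial\Omega$. But the compatibility condition $\dn\Delta u=0$ is irrelevant at the $H^3$ level: for the Neumann Laplacian one has $D(A^{3/2})=\{u\in H^3(\Omega):\dn u=0\}=H^3_N(\Omega)$, so Lemma~\ref{lem:propPk}(iii) with $s=3$ applies to any $u\in H^3$ with $\dn u=0$, and no boundary condition on $\Delta u$ is needed. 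Since $n(t)\in H^2_N(\Omega)$ for all $t$, one has $n(t)\in H^3_N(\Omega)$ for a.e.\ $t$, hence $P_k n(t)\to n(t)$ in $H^3$ pointwise, and dominated convergence (using the uniform $H^3_N$-bound $\|P_k\|\lesssim 1$, from the $L^2$-contractivity of $P_k$ conjugated by $A^{3/2}$) gives $P_kn\to n$ in $L^2((0,T),H^3)$. This is precisely what the paper's terse argument invokes, combined with continuity of $\mathcal{F}(0,\cdot)$ from $L^\infty H^2\cap L^2H^3$ to $L^2H^1$.

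Your workaround — commuting $\Delta$ through $P_k$ to write $\Delta\delta_k=(I-P_k)\Delta n$ and then splitting every summand so that the ``difficult'' factor is either $\delta_k$ (controlled in $H^2$) or $(I-P_k)\Delta n$ (controlled in $H^1$) — is correct, and so is the dominated-convergence estimate of $\|(I-P_k)\Delta n\|_{L^2_tH^1}$. But it is not genuinely avoiding the $H^3$-convergence: since $\dn(P_kn-n)=0$, the convergences $(I-P_k)\Delta n\to 0$ in $H^1$ and $\delta_k\to 0$ in $L^2$ imply, by elliptic regularity, exactly $P_kn\to n$ in $H^3$. So the detour is a reformulation rather than a distinct route. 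Also, the compactness-of-the-image argument you use to get $\sup_{t}\|\delta_k(t)\|_{H^2}\to 0$ is an unnecessary complication: the bound $\|\delta_k(t)\wedge\Delta n(t)\|_{H^1}\lesssim\|\delta_k(t)\|_{H^2}\|n(t)\|_{H^3}$ already gives pointwise-in-$t$ convergence to zero with an $L^2_t$-dominating function, so plain dominated convergence suffices without invoking compactness of $n([0,T])$.
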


\emph{Gathering $L^2$ estimates.} 
Adding \eqref{eq:estimI1}, \eqref{eq:estimI2}, \eqref{eq:estimI3} and \eqref{eq:estimI4}, 
we get 
\begin{equation} \label{eq:estimphikepsL2}
\begin{split}
\frac{\eps}{2} \frac{\rm d}{\rm dt} 
\left( \| \varphi_k^\eps \|_{L^2(\Omega)}^2 \right) 
& + \alpha \| \nabla \varphi_k^\eps \|_{L^2(\Omega)}^2 \\
& \le 
C \| \varphi_k^\eps \|_{H^2(\Omega)} \Big( t + \| \varphi_k^\eps \|_{H^2(\Omega)} 
+ \| \varphi_k^\eps \|_{H^2(\Omega)}^3 + r_k^\eps \Big),
\end{split}
\end{equation}
for some constant depending on the quantities 
$\| h_{\rm ext} \|_{L^\infty_{t,x}}$, 
$\| \d_t h_{\rm ext} \|_{L^\infty_{t,x}}$ and $\| n_0 \|_{L^\infty((0,\infty),H^1(\Omega))}$, 
and $r_k^\eps$ from \eqref{eq:estimI4}. 

\subsubsection{\texorpdfstring{$H^2$}{H2} estimates} 

Next, take the scalar product in $L^2(\Omega)$ of $\Delta^2\varphi_k^\eps$ 
with the first equation in \eqref{eq:phikeps} to get
\begin{equation*}
\frac{\eps}{2} \frac{\rm d}{\rm dt} 
\left( \| \Delta\varphi_k^\eps \|_{L^2(\Omega)}^2 \right) 
+ \alpha \| \nabla\Delta\varphi_k^\eps \|_{L^2(\Omega)}^2 = II_1 + II_2 + II_3 + II_4 , 
\end{equation*}
with
\begin{equation*}
\begin{split}
& II_1 = \Big( \Delta^2\varphi_k^\eps \mid 
m_k^\eps \wedge h_T(t,m_k^\eps) 
- n_k^\eps \wedge h_T(0,n_k^\eps) \Big)_{L^2(\Omega)} , \\
& II_2 = \alpha \Big( \Delta^2\varphi_k^\eps \mid  
|\nabla m_k^\eps|^2 m_k^\eps 
- |\nabla n_k^\eps|^2 n_k^\eps \Big)_{L^2(\Omega)} , \\
& II_3 = - \alpha \Big( \Delta^2\varphi_k^\eps \mid 
m_k^\eps \wedge \Big( m_k^\eps \wedge 
(h_{\rm d}(m_k^\eps) + h_{\rm ext}(t)) \Big) \\
& \hspace{3cm} - n_k^\eps \wedge \Big( n_k^\eps \wedge 
(h_{\rm d}(n_k^\eps) + h_{\rm ext}(0)) \Big) 
\Big)_{L^2(\Omega)} , \\
& II_4 = \Big( \Delta^2\varphi_k^\eps \mid 
[P_k,\mathcal{F}(0,\cdot)](n^\eps) \Big)_{L^2(\Omega)} .
\end{split}
\end{equation*}

\paragraph{\emph{Estimating $II_1$.}} 
Split  
$$II_1 = II_{1,1} + II_{1,2} + II_{1,3} ,$$
with 
\begin{equation*}
\begin{split}
& II_{1,1} = \Big( \Delta^2\varphi_k^\eps \mid m_k^\eps \wedge 
(\Delta\varphi_k^\eps + h_{\rm d}(\varphi_k^\eps)) \Big)_{L^2(\Omega)} , \\
& II_{1,2} = \Big( \Delta^2\varphi_k^\eps \mid \varphi_k^\eps \wedge 
h_T(t,n_k^\eps) \Big)_{L^2(\Omega)} , \\
& II_{1,3} = \Big( \Delta^2\varphi_k^\eps \mid n_k^\eps \wedge 
(h_{\rm ext}(t)-h_{\rm ext}(0)) \Big)_{L^2(\Omega)} .
\end{split}
\end{equation*}

The first term is written 
\begin{equation*}
II_{1,1} = \Big( \Delta^2\varphi_k^\eps \mid n_k^\eps \wedge 
(\Delta\varphi_k^\eps + h_{\rm d}(\varphi_k^\eps)) \Big)_{L^2(\Omega)} 
+ \Big( \Delta^2\varphi_k^\eps \mid \varphi_k^\eps \wedge 
(\Delta\varphi_k^\eps + h_{\rm d}(\varphi_k^\eps)) \Big)_{L^2(\Omega)} .
\end{equation*}

Integrating by parts,
\begin{equation*} 
\begin{split}
& \Big( \Delta^2\varphi_k^\eps \mid n_k^\eps \wedge 
(\Delta\varphi_k^\eps + h_{\rm d}(\varphi_k^\eps)) \Big)_{L^2(\Omega)} = \\
& - \Big( \nabla\Delta\varphi_k^\eps \mid \nabla n_k^\eps \wedge 
(\Delta\varphi_k^\eps + h_{\rm d}(\varphi_k^\eps)) 
+ n_k^\eps \wedge \nabla h_{\rm d}(\varphi_k^\eps) \Big)_{L^2(\Omega)} \\
& \le \eta \| \nabla\Delta\varphi_k^\eps \|_{L^2(\Omega)}^2 
+ \frac{1}{4\eta} \| \nabla n_k^\eps \wedge 
(\Delta\varphi_k^\eps + h_{\rm d}(\varphi_k^\eps)) 
+ n_k^\eps \wedge \nabla h_{\rm d}(\varphi_k^\eps)\|_{L^2(\Omega)}^2 \\
& \le \eta \| \nabla\Delta\varphi_k^\eps \|_{L^2(\Omega)}^2 
+ C_\eta \left( \| n_k^\eps \|_{H^2(\Omega)}^2  
+ \| \nabla\Delta n_k^\eps \|_{L^2(\Omega)}^2 \right) 
\| \varphi_k^\eps \|_{H^2(\Omega)} ,
\end{split}
\end{equation*}
for all $\eta>0$, for some (large) constant $C_\eta$, 
using Sobolev's inequalities.  
>From this, we deduce that for all $\eta>0$, 
there exists $C_\eta>0$, depending only on  
$\| n_0 \|_{L^\infty((0,\infty),H^2(\Omega))}$, such that 
\begin{equation} \label{eq:estimII111}
\begin{split}
\Big( \Delta^2\varphi_k^\eps \mid n_k^\eps \wedge 
& (\Delta\varphi_k^\eps + h_{\rm d}(\varphi_k^\eps)) \Big)_{L^2(\Omega)} 
\le \\
& \eta \| \nabla\Delta\varphi_k^\eps \|_{L^2(\Omega)}^2 
+ C_\eta \left( 1 + \| \nabla\Delta n_k^\eps \|_{L^2(\Omega)}^2 \right) 
\| \varphi_k^\eps \|_{H^2(\Omega)}^2 .
\end{split}
\end{equation}

Integrating by parts again,
\begin{equation*} 
\begin{split}
& \Big( \Delta^2\varphi_k^\eps \mid \varphi_k^\eps \wedge 
(\Delta\varphi_k^\eps + h_{\rm d}(\varphi_k^\eps)) \Big)_{L^2(\Omega)} = \\
& - \Big( \nabla\Delta\varphi_k^\eps \mid \nabla\varphi_k^\eps \wedge 
(\Delta\varphi_k^\eps + h_{\rm d}(\varphi_k^\eps)) 
+ \varphi_k^\eps \wedge \nabla h_{\rm d}(\varphi_k^\eps) \Big)_{L^2(\Omega)} \\
& \le \| \nabla\Delta\varphi_k^\eps \|_{L^2(\Omega)} 
\| \nabla\varphi_k^\eps \|_{L^\infty(\Omega)} \| \Delta\varphi_k^\eps \|_{L^2(\Omega)} \\
& \quad + \| \nabla\Delta\varphi_k^\eps \|_{L^2(\Omega)} \left( 
\| \nabla\varphi_k^\eps \|_{L^4(\Omega)} \| h_{\rm d}(\varphi_k^\eps)) \|_{L^4(\Omega)}
+ \| \varphi_k^\eps \|_{L^4(\Omega)} \| \nabla h_{\rm d}(\varphi_k^\eps)) \|_{L^4(\Omega)}
\right) \\
& \les \| \nabla\Delta\varphi_k^\eps \|_{L^2(\Omega)} 
\left( \| \Delta\nabla\varphi_k^\eps \|_{L^2(\Omega)} + \| \nabla\varphi_k^\eps \|_{L^2(\Omega)} 
\right) \| \varphi_k^\eps \|_{H^2(\Omega)} \\
& \quad + \| \nabla\Delta\varphi_k^\eps \|_{L^2(\Omega)} \| \varphi_k^\eps \|_{H^2(\Omega)}^2,
\end{split}
\end{equation*}
using 
$\| \nabla h_{\rm d}(\varphi_k^\eps) \|_{L^4(\Omega)} 
\les \| h_{\rm d}(\varphi_k^\eps) \|_{H^2(\Omega)} \les \| \varphi_k^\eps \|_{H^2(\Omega)}$. 
Hence, there exists an absolute constant $C>0$, and for all $\eta>0$, there exists $C_\eta>0$ such 
that 
\begin{equation} \label{eq:estimII112}
\begin{split}
\Big( \Delta^2\varphi_k^\eps \mid \varphi_k^\eps \wedge 
& (\Delta\varphi_k^\eps + h_{\rm d}(\varphi_k^\eps)) \Big)_{L^2(\Omega)} \le \\
& \left( \eta + C \| \varphi_k^\eps \|_{H^2(\Omega)} \right) 
\| \nabla\Delta\varphi_k^\eps \|_{L^2(\Omega)}^2 
+ C_\eta \| \varphi_k^\eps \|_{H^2(\Omega)}^4 .
\end{split}
\end{equation}
Summing up \eqref{eq:estimII111} and 
\eqref{eq:estimII112}, one gets $C>0$ and, for all $\eta>0$, a constant $C_\eta>0$ 
(depending on $\| n_0 \|_{L^\infty((0,\infty),H^2(\Omega))}$) such that
\begin{equation} \label{eq:estimII11}
\begin{split}
II_{1,1} \le 
( \eta + 
& C \| \varphi_k^\eps \|_{H^2(\Omega)} ) 
\| \nabla\Delta\varphi_k^\eps \|_{L^2(\Omega)}^2 \\
& + C_\eta \left( 1 + \| \nabla\Delta n_k^\eps \|_{L^2(\Omega)}^2 
+ \| \varphi_k^\eps \|_{H^2(\Omega)}^2 \right) 
\| \varphi_k^\eps \|_{H^2(\Omega)}^2 .
\end{split}
\end{equation}

The second term is 
\begin{equation*}
\begin{split}
II_{1,2} = 
& - \Big( \nabla\Delta\varphi_k^\eps \mid \nabla\varphi_k^\eps \wedge 
(\Delta n_k^\eps + h_{\rm d}(n_k^\eps) + h_{\rm ext}(t)) \Big)_{L^2(\Omega)} \\
& - \Big( \nabla\Delta\varphi_k^\eps \mid \varphi_k^\eps \wedge (\nabla\Delta n_k^\eps 
+ \nabla h_{\rm d}(\varphi_k^\eps) + \nabla h_{\rm ext}(t)) \Big)_{L^2(\Omega)} .
\end{split}
\end{equation*}
Using Sobolev's inequalities again, we have, for all $\eta>0$, 
a constant $C_\eta$ 
(depending on $\| n_0 \|_{L^\infty((0,\infty),H^2(\Omega))}$ 
and $\| h_{\rm ext} \|_{L^\infty_{t,x}}$) such that
\begin{equation} \label{eq:estimII121}
\begin{split}
- 
& \Big( \nabla\Delta\varphi_k^\eps \mid \nabla\varphi_k^\eps \wedge 
(\Delta n_k^\eps + h_{\rm d}(n_k^\eps) + h_{\rm ext}(t)) \Big)_{L^2(\Omega)}
\le \| \nabla\Delta\varphi_k^\eps \|_{L^2(\Omega)} \times \\
& \times \left( \| \nabla\varphi_k^\eps \|_{L^4(\Omega)} 
\| \Delta n_k^\eps + h_{\rm d}(n_k^\eps) \|_{L^4(\Omega)} 
+ \| \nabla\varphi_k^\eps \|_{L^2(\Omega)} \| h_{\rm ext}(t) \|_{L^\infty(\Omega)} \right) \\
& \le C \| \nabla\Delta\varphi_k^\eps \|_{L^2(\Omega)} \| \varphi_k^\eps \|_{H^2(\Omega)} \\ 
& \qquad \qquad \left( \| \nabla\Delta n_k^\eps \|_{L^2(\Omega)} + \| n_k^\eps \|_{H^2(\Omega)} + 
\| h_{\rm ext}(t) \|_{L^\infty(\Omega)} \right) \\
& \le \eta \| \nabla\Delta\varphi_k^\eps \|_{L^2(\Omega)}^2 + 
C_\eta \left( 1 + \| \nabla\Delta n_k^\eps \|_{L^2(\Omega)}^2 \right) 
\| \varphi_k^\eps \|_{H^2(\Omega)}^2.
\end{split}
\end{equation}
In the same way, for all $\eta>0$, 
there is $C_\eta>0$ (depending on $\| n_0 \|_{L^\infty((0,\infty),H^2(\Omega))}$ and 
$\| \nabla h_{\rm ext} \|_{L^\infty_{t,x}}$) such that 
\begin{equation} \label{eq:estimII122}
\begin{split}
& - \Big( \nabla\Delta\varphi_k^\eps \mid \varphi_k^\eps \wedge (\nabla\Delta n_k^\eps 
+ \nabla h_{\rm d}(\varphi_k^\eps) + \nabla h_{\rm ext}(t)) \Big)_{L^2(\Omega)} \le \\
& \le \eta \| \nabla\Delta\varphi_k^\eps \|_{L^2(\Omega)}^2 + 
C_\eta \left( 1 + \| \nabla\Delta n_k^\eps \|_{L^2(\Omega)}^2 \right) 
\| \varphi_k^\eps \|_{H^2(\Omega)}^2.
\end{split}
\end{equation}
Summing up \eqref{eq:estimII121} and \eqref{eq:estimII122}, we get, for all $\eta>0$, 
a constant $C_\eta>0$ (depending on $\| h_{\rm ext} \|_{L^\infty_{t,x}}$, 
$\| \nabla h_{\rm ext} \|_{L^\infty_{t,x}}$ and 
$\| n_0 \|_{L^\infty((0,\infty),H^2(\Omega))}$) such that
\begin{equation} \label{eq:estimII12}
II_{1,2} \le \eta \| \nabla\Delta\varphi_k^\eps \|_{L^2(\Omega)}^2 
+ C_\eta \left( 1 + \| \nabla\Delta n_k^\eps \|_{L^2(\Omega)}^2 \right) 
\| \varphi_k^\eps \|_{H^2(\Omega)}^2.
\end{equation}

The third term is 
\begin{equation*}
II_{1,3} = \Big( \Delta^2\varphi_k^\eps \mid n_k^\eps \wedge 
(h_{\rm ext}(t)-h_{\rm ext}(0)) \Big)_{L^2(\Omega)} .
\end{equation*}
Integrating two times by parts, it is easily estimated, thanks to a constant 
$C$ depending on $\| \d_t h_{\rm ext} \|_{L^\infty_t W^{2,\infty}_x}$, as
\begin{equation} \label{eq:estimII13}
II_{1,3} \le C t \| n_k^\eps \|_{H^2(\Omega)} \| \varphi_k^\eps \|_{H^2(\Omega)} .
\end{equation} 

This gives finally, summing up \eqref{eq:estimII11}, \eqref{eq:estimII12} and \eqref{eq:estimII13}: 
there is $C>0$, and for all $\eta>0$, there is $C_\eta>0$ (depending on $\eta$, 
$\| n_0 \|_{L^\infty((0,\infty),H^2(\Omega))}$, $\| h_{\rm ext} \|_{L^\infty_t W^{1,\infty}_x}$ and 
$\| \d_t h_{\rm ext} \|_{L^\infty_t W^{2,\infty}_x}$), such that
\begin{equation} \label{eq:estimII1}
\begin{split}
II_1 \le \, 
& \Big( \eta + C \| \varphi_k^\eps \|_{H^2(\Omega)} \Big) 
\| \nabla\Delta\varphi_k^\eps \|_{L^2(\Omega)}^2 \\
& + C_\eta \Big( \left( 1 + \| \nabla\Delta n_k^\eps \|_{L^2(\Omega)}^2 
+ \| \varphi_k^\eps \|_{H^2(\Omega)}^2 \right) \| \varphi_k^\eps \|_{H^2(\Omega)}^2 
+ t \| \varphi_k^\eps \|_{H^2(\Omega)} \Big) .
\end{split}
\end{equation}

\paragraph{\emph{Estimating $II_2$.}} 
Split  
$$II_2 = II_{2,1} + II_{2,2} ,$$
with 
\begin{equation*}
\begin{split}
II_{2,1} 
& = \Big( \Delta^2\varphi_k^\eps \mid 
( |\nabla m_k^\eps|^2-|\nabla n_k^\eps|^2 ) 
( n_k^\eps + \varphi_k^\eps ) \Big)_{L^2(\Omega)} \\
& = \Big( \Delta^2\varphi_k^\eps \mid 
\nabla\varphi_k^\eps \cdot ( \nabla \varphi_k^\eps 
+ 2\nabla n_k^\eps ) m_k^\eps \Big)_{L^2(\Omega)} , \\
II_{2,2} 
& = \Big( \Delta^2\varphi_k^\eps \mid 
|\nabla n_k^\eps|^2 (m_k^\eps-n_k^\eps) \Big)_{L^2(\Omega)} 
= \Big( \Delta^2\varphi_k^\eps \mid 
|\nabla n_k^\eps|^2 \varphi_k^\eps \Big)_{L^2(\Omega)} .
\end{split}
\end{equation*}
Then, using in particular the Sobolev inequality from 
Lemma~\ref{lem:sobolev}
$$
\| \nabla n_k^\eps \|_{L^\infty(\Omega)} 
\les \| \nabla n_k^\eps \|_{L^2(\Omega)} 
+ \| \nabla \Delta n_k^\eps \|_{L^2(\Omega)} ,
$$
we get:
\begin{equation} \label{eq:estimII21}
\begin{split} 
& II_{2,1} = 
- \Big( \nabla\Delta\varphi_k^\eps \mid 
\Delta\varphi_k^\eps \cdot ( \nabla \varphi_k^\eps + 2\nabla n_k^\eps ) 
( n_k^\eps + \varphi_k^\eps ) \Big)_{L^2(\Omega)} \\
& \qquad\quad - \Big( \nabla\Delta\varphi_k^\eps \mid 
\nabla\varphi_k^\eps \cdot ( \Delta \varphi_k^\eps + 2\Delta n_k^\eps ) 
( n_k^\eps + \varphi_k^\eps ) \Big)_{L^2(\Omega)} \\
& \qquad\quad - \Big( \nabla\Delta\varphi_k^\eps \mid 
\nabla\varphi_k^\eps \cdot ( \nabla \varphi_k^\eps + 2\nabla n_k^\eps ) 
( \nabla n_k^\eps + \nabla\varphi_k^\eps ) \Big)_{L^2(\Omega)} \\
& \le \| \nabla\Delta\varphi_k^\eps \|_{L^2(\Omega)} 
\| \Delta\varphi_k^\eps \|_{L^2(\Omega)} \times \\ 
& \qquad \times 
\Big( \| \nabla\varphi_k^\eps \|_{L^\infty(\Omega)} 
+ 2 \| \nabla n_k^\eps \|_{L^\infty(\Omega)} \Big)
\Big( \| n_k^\eps \|_{L^\infty(\Omega)} + \| \varphi_k^\eps \|_{L^\infty(\Omega)} \Big) \\
& \quad + \| \nabla\Delta\varphi_k^\eps \|_{L^2(\Omega)} 
\| \nabla\varphi_k^\eps \|_{L^4(\Omega)} \times \\ 
& \qquad \times 
\Big( \| \Delta\varphi_k^\eps \|_{L^4(\Omega)} 
+ 2 \| \Delta n_k^\eps \|_{L^4(\Omega)} \Big)
\Big( \| n_k^\eps \|_{L^\infty(\Omega)} 
+ \| \varphi_k^\eps \|_{L^\infty(\Omega)} \Big) \\
& \quad + \| \nabla\Delta\varphi_k^\eps \|_{L^2(\Omega)} 
\| \nabla\varphi_k^\eps \|_{L^6(\Omega)} \times \\ 
& \qquad \times 
\Big( \| \nabla\varphi_k^\eps \|_{L^6(\Omega)} 
+ 2 \| \nabla n_k^\eps \|_{L^6(\Omega)} \Big)
\Big( \| \nabla n_k^\eps \|_{L^6(\Omega)} 
+ \| \nabla\varphi_k^\eps \|_{L^6(\Omega)} \Big) \\
& \les \| \nabla\Delta\varphi_k^\eps \|_{L^2(\Omega)} 
\| \varphi_k^\eps \|_{H^2(\Omega)} 
\Big( \| n_k^\eps \|_{H^2(\Omega)} 
+ \| \varphi_k^\eps \|_{H^2(\Omega)} \Big) \times \\ 
& \qquad \times 
\Big( \| \nabla\varphi_k^\eps \|_{L^2(\Omega)} 
+ \| \nabla\Delta\varphi_k^\eps \|_{L^2(\Omega)} 
+ \| \nabla n_k^\eps \|_{L^2(\Omega)} 
+ \| \nabla\Delta n_k^\eps \|_{L^2(\Omega)} \Big) \\
& \quad + \| \nabla\Delta\varphi_k^\eps \|_{L^2(\Omega)} 
\| \varphi_k^\eps \|_{H^2(\Omega)} 
\Big( \| n_k^\eps \|_{H^2(\Omega)} 
+ \| \varphi_k^\eps \|_{H^2(\Omega)} \Big) \times \\
& \qquad \times 
\Big( \| \Delta\varphi_k^\eps \|_{L^2(\Omega)} 
+ \| \nabla\Delta\varphi_k^\eps \|_{L^2(\Omega)} 
+ \| \Delta n_k^\eps \|_{L^2(\Omega)} 
+ \| \nabla\Delta n_k^\eps \|_{L^2(\Omega)} \Big) \\
& \quad + \| \nabla\Delta\varphi_k^\eps \|_{L^2(\Omega)} 
\| \varphi_k^\eps \|_{H^2(\Omega)} 
\Big( \| n_k^\eps \|_{H^2(\Omega)} 
+ \| \varphi_k^\eps \|_{H^2(\Omega)} \Big)^2 \\
& \les \| \nabla\Delta\varphi_k^\eps \|_{L^2(\Omega)} 
\| \varphi_k^\eps \|_{H^2(\Omega)} 
\Big( \| n_k^\eps \|_{H^2(\Omega)} 
+ \| \varphi_k^\eps \|_{H^2(\Omega)} \Big) \times \\
& \qquad \times 
\Big( \| \varphi_k^\eps \|_{H^2(\Omega)} 
+ \| \nabla\Delta\varphi_k^\eps \|_{L^2(\Omega)} 
+ \| n_k^\eps \|_{H^2(\Omega)} 
+ \| \nabla \Delta n_k^\eps \|_{L^2(\Omega)} \Big) \\
& \les \Big( \eta + \| \varphi_k^\eps \|_{H^2(\Omega)}
\Big( \| n_k^\eps \|_{H^2(\Omega)} + 
\| \varphi_k^\eps \|_{H^2(\Omega)} \Big) \Big) 
\| \nabla\Delta\varphi_k^\eps \|_{L^2(\Omega)}^2 \\ 
& \qquad + C_\eta \| \varphi_k^\eps \|_{H^2(\Omega)}^2 
\Big( \| \varphi_k^\eps \|_{H^2(\Omega)} 
+ \| n_k^\eps \|_{H^3(\Omega)} 
+ \| \nabla \Delta n_k^\eps \|_{L^2(\Omega)} \Big)^2 \times \\
& \qquad \times 
\Big( \| n_k^\eps \|_{H^2(\Omega)} 
+ \| \varphi_k^\eps \|_{H^2(\Omega)} \Big)^2 ,
\end{split}
\end{equation}
for all $\eta>0$, for some $C_\eta>0$.

Also, for all $\eta>0$, there is $C_\eta>0$ such that
\begin{equation} \label{eq:estimII22}
\begin{split}
II_{2,2} 
& = - \Big( \nabla\Delta\varphi_k^\eps \mid 
2\nabla n_k^\eps \Delta n_k^\eps \varphi_k^\eps \Big)_{L^2(\Omega)} 
- \Big( \nabla\Delta\varphi_k^\eps \mid 
|\nabla n_k^\eps|^2 \nabla\varphi_k^\eps \Big)_{L^2(\Omega)} \\
& \le \| \nabla\Delta\varphi_k^\eps \|_{L^2(\Omega)} 
\Big( 2 \| \nabla n_k^\eps \|_{L^\infty(\Omega)} \| \Delta n_k^\eps \|_{L^2(\Omega)} 
\| \varphi_k^\eps \|_{L^\infty(\Omega)} \\
& \qquad \qquad \qquad \qquad \qquad \qquad + \| \nabla n_k^\eps \|_{L^6(\Omega)}^2 
\| \nabla\varphi_k^\eps \|_{L^6(\Omega)} \Big) \\
& \les \eta \| \nabla\Delta\varphi_k^\eps \|_{L^2(\Omega)}^2 \\ 
& \qquad + C_\eta \Big( \| n_k^\eps \|_{H^2(\Omega)}^2 
+ \| \nabla \Delta n_k^\eps \|_{L^2(\Omega)}^2 + 1 \Big) 
\| n_k^\eps \|_{H^2(\Omega)}^2 \| \varphi_k^\eps \|_{H^2(\Omega)}^2 .
\end{split}
\end{equation}
Summing up \eqref{eq:estimII21} and \eqref{eq:estimII22}, we get: 
there is $C>0$, and for all $\eta>0$, there is $C_\eta>0$ (with $C$ 
and $C_\eta$ depending on 
$\| n_0 \|_{L^\infty((0,\infty),H^2(\Omega))}$) such that 
\begin{equation} \label{eq:estimII2}
\begin{split}
II_2 
& \le C \Big( \eta + \| \varphi_k^\eps \|_{H^2(\Omega)}
\Big( 1 + \| \varphi_k^\eps \|_{H^2(\Omega)} \Big) \Big) 
\| \nabla\Delta\varphi_k^\eps \|_{L^2(\Omega)}^2 \\ 
& \qquad + C_\eta \| \varphi_k^\eps \|_{H^2(\Omega)}^2 
\Big( 1 + \| \varphi_k^\eps \|_{H^2(\Omega)}^2 \Big) 
\Big( 1 + \| \varphi_k^\eps \|_{H^2(\Omega)}^2 
+ \| \nabla \Delta n_k^\eps \|_{L^2(\Omega)}^2 \Big).
\end{split}
\end{equation}

\paragraph{\emph{Estimating $II_3$.}} 
Now, 
$$II_3 = II_{3,1} + II_{3,2} ,$$
with
\begin{equation*}
\begin{split}
II_{3,1} & = - \alpha \Big( \Delta^2\varphi_k^\eps \mid 
m_k^\eps \wedge \Big( m_k^\eps \wedge h_{\rm d}(m_k^\eps) \Big) 
- n_k^\eps \wedge \Big( n_k^\eps \wedge h_{\rm d}(n_k^\eps) \Big) \Big)_{L^2(\Omega)}, \\
II_{3,2} & = - \alpha \Big( \Delta^2\varphi_k^\eps \mid 
m_k^\eps \wedge \Big( m_k^\eps \wedge h_{\rm ext}(t) \Big) 
- n_k^\eps \wedge \Big( n_k^\eps \wedge h_{\rm ext}(0) \Big) \Big)_{L^2(\Omega)}.  
\end{split}
\end{equation*}
Concerning $II_{3,1}$, first write $m_k^\eps = n_k^\eps + \varphi_k^\eps$, then integrate 
once by parts, so that $II_{3,1}$ takes the form of a  $L^2$ scalar product between 
$\nabla\Delta\varphi_k^\eps$ and a sum of terms $\nabla(abc)$, where $a$, $b$, $c$ may be 
$n_k^\eps$ (or $h_{\rm d}(n_k^\eps)$) or $\varphi_k^\eps$ (or $h_{\rm d}(\varphi_k^\eps)$), 
and at least one of them is $n_k^\eps$ (or $h_{\rm d}(n_k^\eps)$). Estimating each of $a$, 
$b$, $c$ and their gradients in $L^6$, one gets: for all $\eta>0$, there is $C_\eta>0$ such 
that 
\begin{equation} \label{eq:estimII31}
II_{3,1} \le \eta \| \nabla\Delta\varphi_k^\eps \|_{L^2(\Omega)}^2 
+ C_\eta \Big( 1 + \| n_k^\eps \|_{H^2(\Omega)}^2 + \| \varphi_k^\eps \|_{H^2(\Omega)}^2 \Big)^2 
\| \varphi_k^\eps \|_{H^2(\Omega)}^2 .
\end{equation}
Then, split $II_{3,2}$, 
\begin{equation*} 
\begin{split}
II_{3,2} 
& = - \alpha \Big( \Delta^2\varphi_k^\eps \mid m_k^\eps \wedge \Big( m_k^\eps \wedge 
( h_{\rm ext}(t) - h_{\rm ext}(0) ) \Big) \Big)_{L^2(\Omega)} \\
& \quad - \alpha \Big( \Delta^2\varphi_k^\eps \mid 
m_k^\eps \wedge \Big( m_k^\eps \wedge h_{\rm ext}(0) \Big)
- n_k^\eps \wedge \Big( n_k^\eps \wedge h_{\rm ext}(0) \Big) \Big)_{L^2(\Omega)}.
\end{split}
\end{equation*} 

The second term is estimated as $II_{3,1}$. The first one is split into a sum 
involving $n_k^\eps \wedge \Big( n_k^\eps \wedge ( h_{\rm ext}(t) - h_{\rm ext}(0) ) \Big)$, 
and products of $h_{\rm ext}(t)$ with two terms, one of them being $\varphi_k^\eps$, and the 
other, $\varphi_k^\eps$ or $n_k^\eps$. This leads to: for all $\eta>0$, there is $C_\eta>0$ 
(also depending on $h_{\rm ext}$) such that 
\begin{equation} \label{eq:estimII32}
\begin{split}
II_{3,2} \le 
& \eta \| \nabla\Delta\varphi_k^\eps \|_{L^2(\Omega)}^2 
+ C_\eta \Big( 
\Big( 1 + \| n_k^\eps \|_{H^2(\Omega)}^2 + \| \varphi_k^\eps \|_{H^2(\Omega)}^2 \Big) 
\| \varphi_k^\eps \|_{H^2(\Omega)}^2 \\
& \qquad \qquad \qquad \qquad \qquad \qquad \qquad + t \| n_k^\eps \|_{H^2(\Omega)}^2 
\| \varphi_k^\eps \|_{H^2(\Omega)} \Big) .
\end{split}
\end{equation}
Finally, summing up \eqref{eq:estimII31} and \eqref{eq:estimII32}, 
we have: for all $\eta>0$, there is $C_\eta>0$ (depending on 
$\| n_0 \|_{L^\infty((0,\infty),H^2(\Omega))}$) such that
\begin{equation} \label{eq:estimII3}
\begin{split}
II_3 \le 
& \eta \| \nabla\Delta\varphi_k^\eps \|_{L^2(\Omega)}^2 \\
& + C_\eta \Big( 
\Big( 1 + \| \varphi_k^\eps \|_{H^2(\Omega)}^2 \Big)^2 
\| \varphi_k^\eps \|_{H^2(\Omega)}^2 
+ t \| \varphi_k^\eps \|_{H^2(\Omega)} \Big) .
\end{split}
\end{equation}

\paragraph{\emph{Estimating $II_4$.}} 
Integrating once by parts, we get
$$II_4 = - \Big( \nabla\Delta\varphi_k^\eps \mid 
\nabla[P_k,\mathcal{F}(0,\cdot)](n^\eps) \Big)_{L^2(\Omega)} .$$
Thus, for all $\eta>0$, there exists $C_\eta>0$ such that 
\begin{equation} \label{eq:estimII4}
II_4 \le \eta \| \nabla\Delta\varphi_k^\eps \|_{L^2(\Omega)}^2 + C_\eta {r_{k,1}^\eps},
\end{equation}
with 
$$r_{k,1}^\eps = \| \nabla[P_k,\mathcal{F}(0,\cdot)](n^\eps) \|_{L^2(\Omega)}^2 \Tend{k}{\infty} 0 
\mbox{ in } L^\infty(0,T) \mbox{ for all } T>0, \mbox{ with } \eps \mbox{ fixed} ,$$
thanks to Lemma \ref{lem:commut}.

\subsubsection{Conclusion}  

>From \eqref{eq:estimII1}, \eqref{eq:estimII2}, \eqref{eq:estimII3} and \eqref{eq:estimII4}, 
we deduce that there is a constant $C>0$ (depending on $\| n_0 \|_{L^\infty((0,\infty),H^3(\Omega))}$), 
and for all $\eta>0$, there is $C_\eta>0$ (depending on $\eta$, 
$\| n_0 \|_{L^\infty((0,\infty),H^3(\Omega))}$, $\| h_{\rm ext} \|_{L^\infty_t W^{1,\infty}_x}$ and 
$\| \d_t h_{\rm ext} \|_{L^\infty_t W^{2,\infty}_x}$), such that 
\begin{equation} \label{eq:estimDeltaphikepsL2}
\begin{split}
\frac{\eps}{2} \frac{\rm d}{\rm dt} 
& \left( \| \Delta\varphi_k^\eps \|_{L^2(\Omega)}^2 \right) \\ 
+ & \Big( \alpha - C ( \eta + \| \varphi_k^\eps \|_{H^2(\Omega)} 
( 1 + \| \varphi_k^\eps \|_{H^2(\Omega)} ) ) \Big) 
\| \nabla\Delta\varphi_k^\eps \|_{L^2(\Omega)}^2 \\
& \le C_\eta \Big( \| \varphi_k^\eps \|_{H^2(\Omega)}^2 
( 1 + \| \varphi_k^\eps \|_{H^2(\Omega)}^2 ) 
( 1 + \| \varphi_k^\eps \|_{H^2(\Omega)}^2 
+ \| \nabla \Delta n_k^\eps \|_{L^2(\Omega)}^2) \\ 
& \qquad\qquad\qquad\qquad\qquad\qquad
+ t \| \varphi_k^\eps \|_{H^2(\Omega)} + r_{k,1}^\eps \Big) .
\end{split} 
\end{equation}
Sum up \eqref{eq:estimphikepsL2} and \eqref{eq:estimDeltaphikepsL2}, to get:  
there is a constant $C>0$ (depending on $\| n_0 \|_{L^\infty((0,\infty),H^3(\Omega))}$), and 
for all $\eta>0$, there is $C_\eta>0$ (depending on $\eta$, 
$\| n_0 \|_{L^\infty((0,\infty),H^2(\Omega))}$, $\| h_{\rm ext} \|_{L^\infty_t W^{1,\infty}_x}$ and 
$\| \d_t h_{\rm ext} \|_{L^\infty_t W^{2,\infty}_x}$),
such that 
\begin{equation} \label{eq:estimphikepsH2}
\begin{split}
\frac{\eps}{2} \frac{\rm d}{\rm dt} 
& \left( \| \varphi_k^\eps \|_{H^2(\Omega)}^2 \right) 
+ \Big( \alpha - C ( \eta + \| \varphi_k^\eps \|_{H^2(\Omega)} 
( 1 + \| \varphi_k^\eps \|_{H^2(\Omega)} ) ) \Big)  
\| \nabla\Delta\varphi_k^\eps \|_{H^2(\Omega)}^2 \\
& \le C_\eta \Big( \| \varphi_k^\eps \|_{H^2(\Omega)}^2 
( 1 + \| \varphi_k^\eps \|_{H^2(\Omega)}^2 ) 
( 1 + \| \varphi_k^\eps \|_{H^2(\Omega)}^2 
+ \| \nabla \Delta n_k^\eps \|_{L^2(\Omega)}^2) \\ 
& \qquad\qquad\qquad\qquad\qquad\qquad
+ t \| \varphi_k^\eps \|_{H^2(\Omega)} + \tilde{r}_k^\eps \Big) ,
\end{split}
\end{equation}
with $\tilde{r}_k^\eps = \| [P_k,\mathcal{F}(0,\cdot)](n^\eps) \|_{H^1(\Omega)}^2 \Tend{k}{\infty} 0$  
in $L^\infty(0,T)$ for all $T>0$, with $\eps$ fixed.

Now, apply the following Gronwall lemma (the proof of which is postponed to 
Section~\ref{sec:proofLemmagronwall}).

\begin{lemma} \label{lem:gronwall}
There is a constant $K>0$ (depending on $n_0$ and $h_{\rm ext}$) 
such that, for all $c\in(0,1/K)$, setting 
$t_\eps = c\eps\ln(1/\eps)$, 
there is $\eps_0 = \eps_0(\alpha,c,K)$ such that \eqref{eq:estimphikepsH2} implies: 
\begin{equation*}
\begin{split}
\forall\eps & \in (0,\eps_0] , \quad 
\exists \underline{k}(\eps)\in\N^\star, \quad \forall k \ge \underline{k}(\eps) , \\ 
& \sup_{[0,t_\eps]} \| \varphi_k^\eps \|_{H^2(\Omega)}^2 \le \left(  
\frac{\eps^{1-cK}}{K} + K \| \tilde{r}_k^\eps \|_{L^1(0,t_{\eps_0})} \eps^{-1-cK} \right) 
e^{K \| \nabla \Delta P_k n_0 \|_{L^2((0,\infty)\times\Omega)}^2}.
\end{split}
\end{equation*}
\end{lemma}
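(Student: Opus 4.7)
The inequality \eqref{eq:estimphikepsH2} is essentially a scalar linear Gronwall inequality in disguise, provided $\varphi_k^\eps$ stays small enough that the coefficient of $\|\nabla\Delta\varphi_k^\eps\|_{L^2(\Omega)}^2$ on the left-hand side remains nonnegative. My plan is therefore to set up a bootstrap. Writing $y(t) := \|\varphi_k^\eps(t)\|_{H^2(\Omega)}^2$, I fix $\eta > 0$ small and pick $\delta_0 > 0$ small (in terms of $\alpha$ and the constant $C$ appearing in \eqref{eq:estimphikepsH2}) so that $C(\eta + \sqrt{\delta_0}(1+\sqrt{\delta_0})) \leq \alpha/2$, and define
$$T_k^\eps := \sup\{\, t \in [0,t_\eps] : \sup_{s\in[0,t]} y(s) \leq \delta_0 \,\}.$$
On $[0,T_k^\eps]$ the coefficient of $\|\nabla\Delta\varphi_k^\eps\|_{L^2}^2$ is $\geq \alpha/2 \geq 0$, so that dissipation term may simply be dropped; the goal then becomes to show $T_k^\eps = t_\eps$.

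On $[0,T_k^\eps]$, since $y \leq \delta_0 \leq 1$, I bound the polynomial factor $y(1+y)(1+y+\|\nabla\Delta n_k^\eps\|_{L^2}^2)$ by $C'\,y(1+\|\nabla\Delta n_k^\eps\|_{L^2}^2)$, and handle the sublinear source via $t\sqrt{y} \leq \tfrac12(t^2+y)$. Absorbing constants into a single $K > 0$ depending on $\alpha$, $n_0$ and $h_{\rm ext}$, this reduces \eqref{eq:estimphikepsH2} to the linear scalar ODE inequality
\begin{equation*}
\eps\, y'(t) \leq a(t)\,y(t) + b(t), \qquad y(0) = 0,
\end{equation*}
with $a(t) = K(1 + \|\nabla\Delta n_k^\eps(t)\|_{L^2}^2)$ and $b(t) = K(t^2 + \tilde{r}_k^\eps(t))$. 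The linear Gronwall lemma then yields, for every $t \in [0,T_k^\eps]$,
\begin{equation*}
y(t) \leq \exp\!\Bigl(\tfrac{1}{\eps}\!\int_0^t a(\tau)\,d\tau\Bigr)\,\int_0^t \frac{b(s)}{\eps}\,ds.
\end{equation*}

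The crucial scaling observation is that $n^\eps(\tau) = n_0(\tau/\eps)$, so a change of variables gives $\tfrac{1}{\eps}\int_0^{t_\eps}\|\nabla\Delta n_k^\eps(\tau)\|_{L^2}^2\,d\tau \leq \|\nabla\Delta P_k n_0\|_{L^2((0,\infty)\times\Omega)}^2$, while $t_\eps/\eps = c\ln(1/\eps)$. Hence $\exp\bigl(\tfrac{1}{\eps}\int_0^{t_\eps} a\bigr) \leq \eps^{-cK}\, e^{K\|\nabla\Delta P_k n_0\|_{L^2}^2}$. For the source integral, the polynomial part contributes $\tfrac{K t_\eps^3}{3\eps} = \tfrac{K c^3}{3}\,\eps^2\ln^3(1/\eps)$, which after multiplication by $\eps^{-cK}$ lies below $\eps^{1-cK}/K$ as soon as $\eps \leq \eps_0 = \eps_0(\alpha,c,K)$ is small enough (this just requires $\eps\ln^3(1/\eps) \leq 3/(K^2 c^3)$); the commutator part contributes $\tfrac{K}{\eps}\|\tilde{r}_k^\eps\|_{L^1(0,t_\eps)} \leq \tfrac{K}{\eps}\|\tilde{r}_k^\eps\|_{L^1(0,t_{\eps_0})}$ since $t_\eps \leq t_{\eps_0}$. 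Combining these pieces produces exactly the claimed bound.

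It remains to close the bootstrap. By Lemma~\ref{lem:commut}, $\|\tilde{r}_k^\eps\|_{L^1(0,t_{\eps_0})} \to 0$ as $k\to\infty$ for each fixed $\eps$, so I can choose $\underline{k}(\eps)$ large enough that the bound just obtained is $< \delta_0$. Continuity of $t \mapsto y(t)$, coming from the finite-dimensional ODE structure of \eqref{eq:mkeps}, then rules out $T_k^\eps < t_\eps$, whence $T_k^\eps = t_\eps$ and the estimate holds on all of $[0,t_\eps]$. The main technical point requiring care is the bookkeeping needed to merge the several occurrences of $K$ (in the exponential rate $\eps^{-cK}$, in the prefactor $e^{K\|\nabla\Delta P_k n_0\|_{L^2}^2}$, and in front of $\|\tilde r_k^\eps\|_{L^1}$) into one constant depending only on $\alpha$, $n_0$ and $h_{\rm ext}$, so that the single hypothesis $c < 1/K$ already forces $\eps^{1-cK}\to 0$ and closes the argument.
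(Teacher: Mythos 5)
Your proof is correct and follows essentially the same route as the paper's: a bootstrap/continuation argument confining $\|\varphi_k^\eps\|_{H^2}$ to the region where the parabolic coefficient stays $\geq \alpha/2$, a linear scalar Gronwall inequality, and the scaling $n^\eps(\tau)=n_0(\tau/\eps)$ to turn $\tfrac{1}{\eps}\int_0^{t_\eps}\|\nabla\Delta n_k^\eps\|_{L^2}^2$ into $\|\nabla\Delta P_k n_0\|_{L^2((0,\infty)\times\Omega)}^2$ and $e^{Kt_\eps/\eps}$ into $\eps^{-cK}$, with Lemma~\ref{lem:commut} closing the bootstrap for $k$ large. The only cosmetic difference is the handling of the sublinear source: you apply Young's inequality $t\sqrt y\le\tfrac12(t^2+y)$, producing a $t^2$ source whose integral $\tfrac{Kt_\eps^3}{3\eps}$ must then be absorbed into $\eps^{1-cK}/K$ via a further smallness of $\eps_0$, whereas the paper simply uses $\sqrt{\phi^\eps}\le 1$ inside the bootstrap region to get a source $\propto t$ and then evaluates $\int_0^t\tfrac{K}{\eps}t'e^{K(t-t')/\eps}\,dt'\le\tfrac{\eps}{K}e^{Kt/\eps}$ exactly; both produce the claimed estimate.
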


\subsubsection{Passing to the limit \texorpdfstring{$k\rightarrow\infty$}{k to infinity}}

For each $\eps \in (0,\eps_0]$ fixed, 
by Lemma \ref{lem:gronwall}, the sequence 
$(\varphi_k^\eps)_{k\in\N^\star}$ is bounded 
in $L^\infty((0,t_\eps),H^2(\Omega))$. 
Equation \eqref{eq:phikeps} then implies that the sequence $(\partial_t\varphi_k^\eps)_{k\in\N^\star}$ is bounded in
$L^\infty((0,t_\eps),L^2(\Omega))$. Furthermore, \eqref{eq:estimphikepsH2} shows that 
$(\varphi_k^\eps)_{k\in\N^\star}$ is also bounded in $L^2((0,t_\eps),H^3(\Omega))$. Aubin's Lemma 
(see \cite{A63}, \cite{S87}) then implies that there is a 
subsequence of $(\varphi_k^\eps)_{k\in\N^\star}$ converging in $L^2((0,t_\eps),H^2(\Omega))$ towards some 
$\varphi^\eps$. 

Up to a subsequence, we may assume that $(\partial_t\varphi_k^\eps)_{k\in\N^\star}$ also converges 
weakly in $L^2((0,t_\eps),L^2(\Omega))$ towards $\partial_t\varphi^\eps$. As $k$ goes to $\infty$, 
$P_k n^\eps$ converges towards 
$n^\eps$ in $C([0,t_\eps],H^2(\Omega)) \cap H^1((0,t_\eps),L^2(\Omega))$. 
Thus, $(m_k^\eps)_{k\in\N^\star}$ converges 
towards some $m^\eps$ in $L^2((0,t_\eps),H^2(\Omega))$, 
with $(\partial_t m_k^\eps)_{k\in\N^\star}$  
converging weakly in $L^2((0,t_\eps),L^2(\Omega))$ towards 
$\partial_t m^\eps$. This is enough to 
pass to the limit in \eqref{eq:mkeps}, so that $m^\eps$ is 
solution to \eqref{eq:llparab}. With $\eps$ fixed, showing that 
$m^\eps$ is continuous in time with values in $H^2$ is standard, 
as well as uniqueness and stability properties: see \cite{CF01}, 
or \cite{AB09}. 
 
Finally, passing to the limit in Lemma~\ref{lem:gronwall} yields: 
$$
\sup_{[0,t_\eps]} \| \varphi^\eps \|_{H^2(\Omega)}^2 
\le \frac{\eps^{1-cK}}{K} 
e^{K \| \nabla \Delta n_0 \|_{L^2((0,\infty)\times\Omega)}^2},
$$
which we write
\begin{equation} \label{eq:estimphiepsH2}
\sup_{[0,t_\eps]} \| \varphi^\eps \|_{H^2(\Omega)}^2 
\le K'\eps^{1-cK}.
\end{equation}

\subsection{Second step: following the slow dynamics 
after \texorpdfstring{$t_\eps$}{t eps}} \label{sec:secondstep}

>From the local-in-time existence result, we know that, 
for each $\eps\in(0,\eps_0)$, 
there is $t^\eps>t_\eps$ such that $m^\eps$ exists, 
as a solution to \eqref{eq:llparab}, 
in $C([0,t^\eps],H^2(\Omega)) \cap L^2((0,t^\eps),H^3(\Omega))$. We shall show, \emph{via} 
\emph{a priori} estimates, that $t^\eps \ge T$ (possibly reducing $\eps_0$). 

>From \eqref{eq:caractmeq}, and with $\mathcal{F}$ from \eqref{eq:defF}, we deduce that, 
on $[0,T]\times\Omega$,  
$$
\eps \d_t m_{\rm eq} - \alpha \Delta m_{\rm eq} = \mathcal{F}(t,m_{\rm eq}) + \eps \d_t m_{\rm eq} .
$$
Substracting to \eqref{eq:llparab}, we get (on $[0,t^\eps]\times\Omega$): 
\begin{equation}
\label{eq:evolm-meq}
\left\{  
\begin{array}{ll}
& (\eps \d_t - \alpha \Delta) (m^\eps-m_{\rm eq}) = \Big( \mathcal{L}(m_{\rm eq})  
+ \mathcal{R}(m_{\rm eq}) \Big) (m^\eps-m_{\rm eq}) + \eps \d_t m_{\rm eq} , \\
& \dn (m^\eps-m_{\rm eq})_{|_{\d\Omega}} = 0 ,
\end{array} \right.
\end{equation}
and we consider the associated Cauchy problem with data given 
at time $t_\eps$. The data at time $t_\eps = c\eps\ln(1/\eps)$ 
satisfy (using \eqref{eq:estimphiepsH2} and \eqref{eq:n0cv}):
\begin{equation} \label{eq:deltaepsini}
\begin{split}
\| (m^\eps-m_{\rm eq})(t_\eps) \|_{H^2(\Omega)} 
& \le
\| (m^\eps-n^\eps)(t_\eps) \|_{H^2(\Omega)} + 
\| n^\eps(t_\eps)-m_{\rm eq}(0) \|_{H^2(\Omega)} \\
& \qquad\qquad\qquad 
+ \| m_{\rm eq}(0)-m_{\rm eq}(t_\eps) \|_{H^2(\Omega)} \\ 
& \le K'\eps^{1-cK} 
+ \| n_0(c\ln(1/\eps))-m_{\rm eq}(0) \|_{H^2(\Omega)} \\
& \qquad\qquad\qquad 
+ \| m_{\rm eq}(0)-m_{\rm eq}(t_\eps) \|_{H^2(\Omega)} 
\Tend{\eps}{0} 0 . 
\end{split}
\end{equation}
Here, for all $\delta \in H^2(\Omega)$ and $t \in [0,T]$, 
\begin{equation} \label{eq:defLeq}
\begin{split}
{\mathcal L}(t,m_{\rm eq}(t)) \, \delta = 
& \alpha |\nabla m_{\rm eq}(t)|^2 \delta 
+ 2\alpha \Big( \nabla m_{\rm eq}(t)\cdot\nabla\delta \Big) m_{\rm eq}(t) \\
& + \delta \wedge h_T(t,m_{\rm eq}(t)) + m_{\rm eq}(t) \wedge 
\Big( \Delta\delta + h_{\rm d}(\delta) \Big) \\ 
& - \alpha \delta \wedge 
\Big( m_{\rm eq}(t) \wedge \Big( h_{\rm d}(m_{\rm eq}(t)) + h_{\rm ext}(t) \Big) \Big) \\
& - \alpha m_{\rm eq}(t) \wedge 
\Big( \delta \wedge \Big( h_{\rm d}(m_{\rm eq}(t)) + h_{\rm ext}(t) \Big) \Big) \\
& - \alpha m_{\rm eq}(t) \wedge \Big( m_{\rm eq}(t) \wedge h_{\rm d}(\delta) \Big) ,
\end{split}
\end{equation}
and 
\begin{equation} \label{eq:defR}
\begin{split}
{\mathcal R}(t,m_{\rm eq}(t)) & \, (\delta) =  
2 \alpha \Big( \nabla m_{\rm eq}(t) \cdot \nabla\delta \Big) \delta 
+ \alpha |\nabla\delta|^2 \delta 
+ \delta \wedge \Big( \Delta\delta + h_{\rm d}(\delta) \Big) \\
& - \alpha \delta \wedge 
\Big( \delta \wedge \Big( h_{\rm d}(m_{\rm eq}(t)) + h_{\rm ext}(t) \Big) \Big) 
- \alpha \delta \wedge \Big( m_{\rm eq}(t) \wedge h_{\rm d}(\delta) \Big) \\
& - \alpha m_{\rm eq}(t) \wedge \Big( \delta \wedge h_{\rm d}(\delta) \Big) 
- \alpha \delta \wedge \Big( \delta \wedge h_{\rm d}(\delta) \Big) .
\end{split}
\end{equation}
In the sequel, we consider 
$$\delta^\eps := m^\eps-m_{\rm eq} 
\in C([0,t^\eps],H^2(\Omega)) \cap L^2((0,t^\eps),H^3(\Omega)),$$
and we simply prove that in ($H^2$) energy estimates, the term due to the residual 
$\mathcal{R}(m_{\rm eq})\delta^\eps$ is dominated by the terms due to $\alpha\Delta
\delta^\eps$ and to the linear term $\mathcal{L}(m_{\rm eq})\delta^\eps$. 
We thus come back to the Galerkine approximation $\delta^\eps_k$ of $\delta^\eps$, 
as in Paragraph~\ref{sec:galerkine}. Take the $L^2(\Omega)$ scalar product 
of the equations with $\delta^\eps_k$ and $\Delta^2\delta^\eps_k$ 
and integrate by parts. Estimating $( \delta^\eps_k \mid \mathcal{R}(m_{\rm eq})
(\delta^\eps_k) )_{L^2(\Omega)}$ is straightforward. Due to the continuity properties of 
$h_{\rm d}$ on Sobolev spaces, $( \Delta^2\delta^\eps_k \mid \mathcal{R}(m_{\rm eq})
(\delta^\eps_k) )_{L^2(\Omega)}$ produces three kinds of terms. 
Dropping the exponent $\eps$ and subscript $k$ 
(and using the notation $L(v_1,\dots,v_n)$ for any $n$-linear 
application), we examine each of them. 
\paragraph{\emph{From 
$\delta \wedge \Big( \delta \wedge h_{\rm d}(\delta) \Big)$.}} 
We have 
$$( \Delta^2\delta \mid L(\delta,\delta,\delta) )_{L^2(\Omega)} 
= ( \Delta\delta \mid \Delta L(\delta,\delta,\delta) )_{L^2(\Omega)} 
\le \| \Delta\delta \|_{L^2(\Omega)} \| \Delta L(\delta,\delta,\delta) \|_{L^2(\Omega)},$$
which is bounded from above by $C \| \delta \|_{H^2(\Omega)}^4$, 
since $H^2(\Omega)$ is an algebra. 

In the same way, the terms of the form 
$( \Delta^2\delta \mid L(\delta,\delta) )_{L^2(\Omega)}$ are controlled by 
$\| \delta \|_{H^2(\Omega)}^3$. This rules out the terms from 
$\delta \wedge h_{\rm d}(\delta)$, 
$\delta \wedge \Big( \delta \wedge \Big( h_{\rm d}(m_{\rm eq}(t)) + h_{\rm ext}(t) \Big) \Big)$, 
$\delta \wedge \Big( m_{\rm eq}(t) \wedge h_{\rm d}(\delta) \Big)$ 
and $m_{\rm eq}(t) \wedge \Big( \delta \wedge h_{\rm d}(\delta) \Big)$.
\paragraph{\emph{From $|\nabla\delta|^2 \delta$.}}  
Write
\begin{equation*}
\begin{split}
( \Delta^2\delta \mid 
L(\nabla\delta,\nabla\delta,& \delta) )_{L^2(\Omega)} = \\
& -( \nabla\Delta\delta \mid 
L(\nabla\delta,\nabla\delta,\nabla\delta) )_{L^2(\Omega)} 
-( \nabla\Delta\delta \mid 
\tilde{L}(\Delta\delta,\nabla\delta,\delta) )_{L^2(\Omega)}.
\end{split}
\end{equation*}
Then, 
\begin{equation*}
\begin{split}
|( \nabla\Delta\delta \mid 
L(\nabla\delta,\nabla\delta,\nabla\delta) )_{L^2(\Omega)}| 
& \le \| \nabla\Delta\delta \|_{L^2(\Omega)} 
\| L(\nabla\delta,\nabla\delta,\nabla\delta) \|_{L^2(\Omega)} \\
& \le C \| \nabla\Delta\delta \|_{L^2(\Omega)} 
\| \nabla\delta \|_{L^6(\Omega)}^3, 
\end{split}
\end{equation*}
and by Sobolev's inequalities, $\| \nabla\delta \|_{L^6(\Omega)}$ is controlled by 
$\| \delta \|_{H^2(\Omega)}$. 

Also, 
$$
|( \nabla\Delta\delta \mid \tilde{L}(\Delta\delta,\nabla\delta,\delta) )_{L^2(\Omega)}| 
\le C \| \nabla\Delta\delta \|_{L^2(\Omega)} 
\| \Delta\delta \|_{L^2(\Omega)} 
\| \nabla\delta \|_{L^\infty(\Omega)} \| \delta \|_{L^\infty(\Omega)}.
$$
>From the estimate 
$$
\| \nabla\delta \|_{L^\infty(\Omega)} 
\les \| \nabla\delta \|_{L^2(\Omega)} 
+ \| \nabla\Delta\delta \|_{L^2(\Omega)},
$$
we get 
$$
|( \nabla\Delta\delta \mid 
\tilde{L}(\Delta\delta,\nabla\delta,\delta) )_{L^2(\Omega)}| 
\le C ( \| \nabla\Delta\delta \|_{L^2(\Omega)} 
\| \delta \|_{H^2(\Omega)}^3 
+ \| \nabla\Delta\delta \|_{L^2(\Omega)}^2 
\| \delta \|_{H^2(\Omega)}^2 ).
$$
This leads to
$$
( \Delta^2\delta \mid 
L(\nabla\delta,\nabla\delta,\delta) )_{L^2(\Omega)} 
\le C ( \| \nabla\Delta\delta \|_{L^2(\Omega)} 
\| \delta \|_{H^2(\Omega)}^3
+ \| \nabla\Delta\delta \|_{L^2(\Omega)}^2 
\| \delta \|_{H^2(\Omega)}^2 ).
$$ 
In the same way, we have
$$
( \Delta^2\delta \mid ( \nabla m_{\rm eq}(t) \cdot 
\nabla\delta ) \delta ) 
\le C \| \nabla\Delta\delta \|_{L^2(\Omega)} 
\| \delta \|_{H^2(\Omega)}^2.
$$
\paragraph{\emph{The $\delta \wedge \Delta\delta$ term.}}  
Again,
$$( \Delta^2\delta \mid L(\delta,\Delta\delta) )_{L^2(\Omega)} 
= -( \nabla\Delta\delta \mid 
L(\nabla\delta,\Delta\delta) )_{L^2(\Omega)} 
-( \nabla\Delta\delta \mid 
L(\delta,\nabla\Delta\delta) )_{L^2(\Omega)},
$$
and as above, we get 
$$
( \Delta^2\delta \mid L(\delta,\Delta\delta) )_{L^2(\Omega)} 
\le C ( \| \nabla\Delta\delta \|_{L^2(\Omega)} 
\| \delta \|_{H^2(\Omega)}^2
+ \| \nabla\Delta\delta \|_{L^2(\Omega)}^2 
\| \delta \|_{H^2(\Omega)} ). 
$$

Finally, there is $C>0$, and for all $\eta>0$, there is $C_\eta>0$ such that 
\begin{equation*}
\begin{split}
( \delta^\eps_k \mid \mathcal{R}(m_{\rm eq})(\delta^\eps_k) )_{H^2(\Omega)} 
\le 
& \Big( \eta + C \| \delta^\eps_k \|_{H^2(\Omega)} + C \| \delta^\eps_k \|_{H^2(\Omega)}^2 \Big) 
\| \nabla\Delta\delta^\eps_k \|_{L^2(\Omega)}^2 \\
& + C_\eta \Big( \| \delta^\eps_k \|_{H^2(\Omega)}^3 + \| \delta^\eps_k \|_{H^2(\Omega)}^4 
+ \| \delta^\eps_k \|_{H^2(\Omega)}^6 \Big) .
\end{split}
\end{equation*}
Let $k$ go to infinity, so that the above estimate applies to $\delta^\eps$ instead of 
$\delta^\eps_k$, up to the local existence time $t^\eps$ obtained \emph{via} the convergence 
of the Galerkine scheme. Coming back to \eqref{eq:evolm-meq}, still with 
$\delta^\eps = m^\eps-m_{\rm eq}$, we get, using \eqref{eq:hypdissip}: 
there is $C>0$, and for all $\eta>0$, there is $C_\eta>0$ such that 
\begin{equation} \label{eq:estimdeltaepsH2}
\begin{split}
\frac{\eps}{2} \frac{\rm d}{\rm dt} 
& \left( \| \delta^\eps \|_{H^2(\Omega)}^2 \right) 
+ \Big( \alpha - \eta - C ( 1 + \| \delta^\eps \|_{H^2(\Omega)} ) 
\| \delta^\eps \|_{H^2(\Omega)} \Big) \| \nabla\delta^\eps \|_{H^2(\Omega)}^2 \\
\le 
& \Big( C_\eta ( 1 + \| \delta^\eps \|_{H^2(\Omega)} )^3 \| \delta^\eps \|_{H^2(\Omega)} 
- C_{\rm lin} \Big) \| \delta^\eps \|_{H^2(\Omega)}^2 
+ \eps^2 \| \d_t m_{\rm eq} \|_{H^2(\Omega)}^2 .
\end{split}
\end{equation}
As in the proof of Lemma~\ref{lem:gronwall}, fix $\eta\in(0,\alpha)$, and consider the time 
$\tilde{t^\eps} \le t^\eps$ up to which, in \eqref{eq:estimdeltaepsH2}, the parenthesis in 
front of $\| \nabla\delta^\eps \|_{H^2(\Omega)}^2$ (resp. $\| \delta^\eps \|_{H^2(\Omega)}^2$)  
remains positive (resp. less than $-C_{\rm lin}/2$). 
We have, for $t\in(t_\eps,\tilde{t^\eps})$: 
$$
\frac{\eps}{2} \frac{\rm d}{\rm dt} \left( \| \delta^\eps \|_{H^2(\Omega)}^2 \right) 
\le
- \frac{C_{\rm lin}}{2}  \| \delta^\eps \|_{H^2(\Omega)}^2 
+ \eps^2 \| \d_t m_{\rm eq} \|_{H^2(\Omega)}^2 .
$$
Gronwall's lemma then implies that 
$$
\sup_{[t_\eps,\tilde{t^\eps}]} \| \delta^\eps \|_{H^2(\Omega)}^2 
\le 
\| \delta^\eps(t_\eps) \|_{H^2(\Omega)}^2 
+ 2 \eps T \sup_{[0,T]} \| \d_t m_{\rm eq} \|_{H^2(\Omega)}^2,
$$
so that, for $\eps$ small enough, we get $\tilde{t^\eps} \ge T$, 
and $\sup_{[t_\eps,T]} \| \delta^\eps \|_{H^2(\Omega)} \Tend{\eps}{0} 0$. 
This finishes the proof of Theorem~\ref{th:asympt}. 
\hfill$\square$

\section{Proof of Proposition~\ref{prop:globalexist} 
and Corollary~\ref{cor:asymptbis}}
\label{sec:proofprops}

\paragraph{Proof of Proposition~\ref{prop:globalexist}.} 
For any $T>0$ and $n \in C([0,T],H^2(\Omega))$,  
it is equivalent for $n$ to be solution to \eqref{eq:n0bis} 
or to 
$$
(\d_t - \alpha \Delta) (n-m_{\rm eq}) = 
\Big( \mathcal L(0,m_{\rm eq}) 
+ \mathcal R(0,m_{\rm eq}) \Big) (n-m_{\rm eq}),
$$
with the same initial and boundary conditions. The operators 
$\mathcal L(0,m_{\rm eq})$ and $\mathcal R(0,m_{\rm eq})$ 
from \eqref{eq:defLeq} and \eqref{eq:defR} 
do not depend on time, now. 
Arguing as in Section~\ref{sec:secondstep}, we get an estimate 
analogue to \eqref{eq:estimdeltaepsH2}, 
\begin{equation} \label{eq:estimn-meqH2} 
\begin{split}
\frac{1}{2} \frac{\rm d}{\rm dt} 
& \left( \| n-m_{\rm eq} \|_{H^2(\Omega)}^2 \right) \\
+ 
& \Big( \alpha - \eta - C ( 1+\| n-m_{\rm eq} \|_{H^2(\Omega)} ) 
\| n-m_{\rm eq} \|_{H^2(\Omega)} \Big) 
\| \nabla (n-m_{\rm eq}) \|_{H^2(\Omega)}^2 \\
\le 
& \Big( C_\eta ( 1 + \| n-m_{\rm eq} \|_{H^2(\Omega)} )^3 \| n-m_{\rm eq} \|_{H^2(\Omega)} 
- C_{\rm lin} \Big) \| n-m_{\rm eq} \|_{H^2(\Omega)}^2 .
\end{split}
\end{equation}
Once $\eta \in (0,\alpha/2)$ is chosen, take $\eta_0>0$ 
such that, when $\| m_0 - m_{\rm eq} \|_{H^2(\Omega)} 
\le \eta_0$, the parentheses in front of 
$\| \nabla (n-m_{\rm eq}) \|_{H^2(\Omega)}^2$ 
and in front of $\| n-m_{\rm eq} \|_{H^2(\Omega)}^2$ 
are positive and negative at $t=0$, respectively. 
The bootstrap argument then shows that 
$n \in C([0,\infty),H^2(\Omega))$, and 
that $n(t)$ converges in $H^2(\Omega,S^2)$, 
as $t$ goes to $\infty$, towards $m_{\rm eq}(t_0)$:
\begin{equation} \label{eq:n-meqexpdecay}
\| n(t)-m_{\rm eq} \|_{H^2(\Omega)} \leq \eta_0 e^{-Ct},
\end{equation}
for some $C \in (0,C_{\rm lin})$ depending on $\eta_0$. 
Coming back to \eqref{eq:estimn-meqH2}, we see also that  
$\nabla(n-m_{\rm eq}) \in L^2((0,\infty),H^2(\Omega))$. 
\hfill$\square$

\paragraph{Proof of Corollary~\ref{cor:asymptbis}.} 
When $m_{\rm eq}(0)$ is constant over $\Omega$, 
Proposition~\ref{prop:globalexist} ensures there exists some 
$\eta_0>0$ such that for all $m_0 \in H^2_N(\Omega,S^2)$ 
satisfying 
$$
\| m_0 - m_{\rm eq} \|_{H^2(\Omega)} \le \eta_0 ,
$$
Asumption (ii) in Theorem~\ref{th:asympt} holds true. 
Furthermore, estimations \eqref{eq:estimn-meqH2} and 
\eqref{eq:n-meqexpdecay} show that the corresponding 
function $n_0$ has norms in $L^\infty((0,\infty),H^2(\Omega))$ 
and in $L^2((0,\infty)\times\Omega))$ controlled in terms 
of $m_{\rm eq}(0)$ and $\eta_0$ only. Thus, $\eps_0$ in 
the proof of Theorem~\ref{th:asympt} may also be chosen 
depending on $m_{\rm eq}(0)$ and $\eta_0$ only, uniformly 
with respect to $m_0$. 
\hfill$\square$

\section{Appendix}
\label{sec:appendix}

\subsection{About the dissipation property~\texorpdfstring{\eqref{eq:hypdissip}}{ref}: 
proof of Lemmas~\texorpdfstring{\ref{lem:dissip}}{ref} and \texorpdfstring{\ref{lem:nondissip}}{ref}} 
\label{sec:proofHypdissip}

Let $\delta \in C([0,T],H^\infty(\Omega))$ be such that  
$|m_{\rm eq}+\delta| \equiv 1$ and  
$\dn \delta_{|_{\d\Omega}} = \dn \Delta \delta_{|_{\d\Omega}} = 0$. 
Then, 
\begin{equation} \label{Lmeqpm}
{\mathcal L}(m_{\rm eq}^\pm) \, \delta = 
(\lambda \mp d) \delta \wedge u \, 
\pm \, u \wedge (\Delta\delta + h_{\rm d}(\delta)) \, 
+ \, \alpha (d \mp \lambda) u \wedge (\delta \wedge u) \,  
- \, \alpha u \wedge (u \wedge h_{\rm d}(\delta)) .
\end{equation}
\subsubsection{\texorpdfstring{$L^2$}{L2} estimates} Take the $L^2(\Omega)$ scalar 
product of \eqref{Lmeqpm} with $\delta$. This yields 
\begin{equation} \label{eq:estimL2Lmeq}
\begin{split}
\big( {\mathcal L}(m_{\rm eq}^\pm) \, \delta 
& \mid \delta \big)_{L^2(\Omega)} = 
\pm \int_\Omega \delta \cdot (u \wedge \Delta\delta) 
\pm \int_\Omega \delta \cdot (u \wedge h_{\rm d}(\delta)) \\ 
& + \alpha (d \mp \lambda) \int_\Omega |\delta \wedge u|^2 
- \alpha \int_\Omega (u \cdot h_{\rm d}(\delta)) (u \cdot \delta) 
+ \alpha \int_\Omega \delta \cdot h_{\rm d}(\delta).
\end{split}
\end{equation}
First consider the case of $m_{\rm eq}^+$.
Denoting $n$ the exterior normal vector to $\Omega$, the first term 
in the right-hand side of \eqref{eq:estimL2Lmeq} is equal to 
\begin{equation} \label{eq:stokes}
\sum_{i=1}^3 \int_\Omega \delta \cdot 
\partial_i(u \wedge \partial_i\delta) 
= \sum_{i=1}^3 \int_{\partial\Omega} \delta \cdot 
(u \wedge \partial_i\delta) n_i
= \int_{\partial\Omega} \delta \cdot 
(u \wedge \partial_n\delta)
= 0.
\end{equation}
Since $h_{\rm d}$ is continuous on $L^2$ with norm 1, the second 
term is bounded from above by $\| \delta \|_{L^2(\Omega)}^2$. 
Similarly, due to the non-positivity of $h_{\rm d}$, the last term 
is non-positive. In the two other terms, we inject the identities  
\begin{equation} \label{eq:constraintdelta}
|\delta|^2 = - 2 u\cdot\delta \quad \text{and} \quad 
|\delta \wedge u|^2 = |\delta|^2 - \frac{1}{4} |\delta|^4,
\end{equation}
which stem from the equality $|u+\delta| \equiv 1$. 
This leads to 
\begin{equation} \label{eq:estimL2Lmeq+}
\begin{split}
\left( {\mathcal L}(m_{\rm eq}^+) \, \delta 
\mid \delta \right)_{L^2(\Omega)} 
& \le \| \delta \|_{L^2(\Omega)}^2 
+ \alpha (d - \lambda) 
\int_\Omega \big( |\delta|^2 - |\delta|^4/4 \big) \\
& \hspace{3cm} + \frac{\alpha}{2} 
\int_\Omega (u \cdot h_{\rm d}(\delta)) |\delta|^2
\\
& = (1 + \alpha(d-\lambda)) \| \delta \|_{L^2(\Omega)}^2 
+ \mathcal{O} (\| \delta \|_{L^2(\Omega)}^3). 
\end{split}
\end{equation}
In the case of $m_{\rm eq}^-$, we obtain in the same way
\begin{equation} \label{eq:estimL2Lmeq-}
\left( {\mathcal L}(m_{\rm eq}^-) \, \delta 
\mid \delta \right)_{L^2(\Omega)} \ge 
\left( \alpha (\lambda + d) - c \right) \| \delta \|_{L^2(\Omega)}^2 
+ \mathcal{O} (\| \delta \|_{L^2(\Omega)}^3),
\end{equation}
for some constant $c$ depending on $\Omega$ and $\alpha$ only.

\subsubsection{\texorpdfstring{$H^2$}{H2} estimates} 
Take the $L^2(\Omega)$ scalar product of the Laplacian of each term 
in \eqref{Lmeqpm} with $\Delta\delta$. This yields 
\begin{equation} \label{eq:estimH2Lmeq}
\begin{split}
\big( \Delta {\mathcal L}(m_{\rm eq}^\pm) \, \delta 
\mid \Delta \delta \big)_{L^2(\Omega)} = 
& (\lambda \mp d) \int_\Omega \Delta\delta \cdot 
\Delta(\delta \wedge u) 
\pm \int_\Omega \Delta\delta \cdot \Delta(u \wedge \Delta\delta) \\
& \pm \int_\Omega \delta \cdot \Delta(u \wedge h_{\rm d}(\delta))  
+ \alpha (d \mp \lambda) \int_\Omega |\Delta\delta \wedge u|^2 \\ 
& - \alpha \int_\Omega \Delta\delta \cdot 
\Delta (u \wedge (u \wedge h_{\rm d}(\delta))) .
\end{split}
\end{equation} 
Since $\Delta(\delta \wedge u) = (\Delta\delta) \wedge u = 0$, 
the first term on the right-hand side vanishes. So does the second 
one, by the same argument as in \eqref{eq:stokes}. The equality 
$|u+\delta| \equiv 1$ implies 
$$
|\Delta\delta \wedge u|^2 = |\Delta\delta|^2 -
\left( |\nabla\delta|^2 - (\delta\cdot\Delta\delta)^2 \right)^2,
$$
so that \eqref{eq:estimH2Lmeq} gives, for $m_{\rm eq}^+$: 
\begin{equation} \label{eq:estimL2DeltaLmeq+}
\big( \Delta {\mathcal L}(m_{\rm eq}^+) \, \delta 
\mid \Delta \delta \big)_{L^2(\Omega)} 
\le - \alpha (\lambda-d) \|\Delta\delta\|_{L^2(\Omega)}^2 
+ c \|\delta\|_{H^2(\Omega)}^2 
+ \mathcal{O} \left( \|\delta\|_{H^2(\Omega)}^3 \right),
\end{equation}
for some constant $c$ depending on $\Omega$ and $\alpha$ only. 
Together with \eqref{eq:estimL2Lmeq+}, we get finally 
\begin{equation} \label{eq:estimH2Lmeq+}
\big( {\mathcal L}(m_{\rm eq}^+) \, \delta 
\mid \delta \big)_{H^2(\Omega)} 
\le - \left( \alpha (\lambda-d)-c \right) \|\delta\|_{H^2(\Omega)}^2 
+ \mathcal{O} \left( \|\delta\|_{H^2(\Omega)}^3 \right),
\end{equation}
which concludes the proof of Lemma~\ref{lem:dissip}. 
\hfill$\square$

In the case of $m_{\rm eq}^-$, we have 
\begin{equation} \label{eq:estimL2DeltaLmeq-}
\begin{split}
\big( \Delta {\mathcal L}(m_{\rm eq}^+) \, \delta 
\mid \Delta \delta \big)_{L^2(\Omega)} = 
& - \int_\Omega \Delta\delta \cdot \Delta(u \wedge h_{\rm d}(\delta)) 
+ \alpha (\lambda+d) \|\Delta\delta\|_{L^2(\Omega)}^2 \\
& - \alpha \int_\Omega \Delta\delta \cdot 
\Delta(u \wedge (u \wedge h_{\rm d}(\delta))) 
+ \mathcal{O} \left( \|\delta\|_{H^2(\Omega)}^3 \right),
\end{split}
\end{equation}
which, together with \eqref{eq:estimL2Lmeq-}, leads to 
Lemma~\ref{lem:nondissip}. 
\hfill$\square$

\subsection{Proof of the commutator lemma~\ref{lem:commut}} 
\label{sec:proofLemmacommut}

Writting  
$$
[P_k,\mathcal{F}(0,\cdot)](n) = (P_k-1)\mathcal{F}(0,n) 
+ \mathcal{F}(0,n) - \mathcal{F}(0,P_kn) ,
$$
the result follows from the convergence of $P_k$ 
towards $1$ pointwise as an operator on $H^1(\Omega)$ 
(which rules out the term $(P_k-1)\mathcal{F}(0,n)$) 
as well as on $H^2_N(\Omega)$, combined 
(to deal with $\mathcal{F}(0,n) - \mathcal{F}(0,P_kn)$) 
with the continuity of $\mathcal{F}(0,\cdot)$ 
from $C([0,T],H^2(\Omega)) \cap L^2((0,T),H^3)$ 
to $L^2((0,T),H^1)$. 

The latter is a consequence of the continuity properties of $h_{\rm d}$ and 
of Sobolev's embeddings, implying that $H^2(\Omega)$ is an algebra (so that 
all applications $n \mapsto n \wedge h_{\rm d}(n)$, $n \mapsto n \wedge 
(n \wedge h_{\rm d}(n))$, $n \mapsto n \wedge h_{\rm ext}(0)$, $n \mapsto n 
\wedge (n \wedge h_{\rm ext}(0))$ are continuous on $L^\infty((0,T),H^2(\Omega))$), 
and that the product operation maps $H^2 \times H^1$ to $H^1$, so that 
$n \mapsto n \wedge \Delta n$ and $n \mapsto |\nabla n|^2n$ are continuous 
from $L^\infty((0,T),H^2(\Omega)) \cap L^2((0,T),H^3)$ to $L^2((0,T),H^1)$. 
\hfill$\square$

\subsection{Proof of Gronwall's lemma~\ref{lem:gronwall}} 
\label{sec:proofLemmagronwall}

First, consider $k\in\N^\star$ and $\eps>0$ fixed. 
Set $\phi^\eps(t) = \| \varphi_k^\eps \|_{H^2(\Omega)}^2$, 
$r(t) = \tilde{r}_k^\eps(t)$, 
$N_0(t) = \| \nabla \Delta P_k n_0(t) \|_{L^2(\Omega)}^2$ 
and $N^\eps(t) = \| \nabla \Delta n_k^\eps(t) \|_{L^2(\Omega)}^2$, 
so that
$$
N^\eps(t) = N_0(t/\eps) \quad \text{and} \quad 
N_0 \in L^1(0,\infty).
$$
With $C$ from \eqref{eq:estimphikepsH2}, choose $\eta \in (0,\alpha/(2C))$. Hence, there exists 
$\kappa_\eta \in (0,1)$ such that 
$$
\forall \varphi \in [0,\kappa_\eta], \quad 
C (\eta + \varphi (1+\varphi)) < \alpha /2 .
$$
Set $K=8C_\eta$ (also from \eqref{eq:estimphikepsH2}), 
$c\in(0,1/K)$ and $t_\eps = c\eps\ln(1/\eps)$. Then, with  
$$
t_k^\eps = \sup\{ t \in [0,t_\eps] \mid \forall t'\in[0,t], \, 
\phi^\eps(t') \le \kappa_\eta \}
$$
($t_k^\eps>0$ since $\phi^\eps(0)=0$), we have:
$$
\forall t\in[0,t_k^\eps], \quad \eps {\phi^\eps}'(t) \le 
K ( (1+N^\eps(t))\phi^\eps(t) + t + r(t) ) .
$$
>From this, we deduce: 
\begin{equation*}
\begin{split}
\forall t\in[0,t_k^\eps] , \quad \phi^\eps(t)  
& \le \int_0^t \frac{K}{\eps} \Big( t' + r(t') \Big)  
\exp\left( \frac{K}{\eps} \int_{t'}^t (1+N^\eps(t''))dt'' \right) 
dt' \\ 
& \le \left( \int_0^t \frac{K}{\eps} \Big( t' + r(t') \Big)  
\exp\left( K\frac{t-t'}{\eps} \right) dt' \right) 
e^{K \| N_0 \|_{L^1(0,\infty)}} \\ 
& \le \left( \frac{\eps}{K} + \frac{K}{\eps} 
\| r \|_{L^1(0,T_0)} \right) 
e^{Kt/\eps} e^{K \| N_0 \|_{L^1(0,\infty)}} ,
\end{split}
\end{equation*}
with $T_0 = c \eps_0 \ln(1/\eps_0)$ (and $\eps_0$ is chosen below). 
Now, since $c\in(0,1/K)$ and $t_\eps = c\eps\ln(1/\eps)$, 
$$
\forall t\in[0,\min(t_k^\eps,t_\eps)] , \quad 
\phi^\eps(t) \le \left( \frac{\eps^{1-cK}}{K} + K \| r \|_{L^1(0,T_0)} \eps^{-1-cK} \right) e^{K \| N_0 \|_{L^1(0,\infty)}} ,
$$
which is less or equal to $\kappa_\eta$ as soon as $\eps$ belongs 
to $(0,\eps_0]$, for 
$$
\eps_0 = \left( \frac{1}{2}\kappa_\eta K 
e^{-K \| N_0 \|_{L^1(0,\infty)}} \right)^{1/(1-cK)} ,
$$ 
and $k$ greater than $K(\eps)$ such that 
$$
\forall k \ge K(\eps), \quad \| \tilde{r}_k^\eps \|_{L^1(0,T_0)} 
\le \frac{1}{2}\kappa_\eta \frac{\eps^{1+cK}}{K} 
e^{-K \| N_0 \|_{L^1(0,\infty)}}
$$
(which is possible by Lemma~\ref{lem:commut}). 
For this choice of $\eps$ and $k$, we thus have 
$t_k^\eps \ge t_\eps$, and the result follows. \hfill$\square$

\bibliographystyle{plain}
\bibliography{bibliohysteresis}

\end{document}